\newtheorem{theo}{Theorem}[section]
\newtheorem{lemma}[theo]{Lemma}
\newtheorem{propo}[theo]{Proposition}
\newtheorem{defi}[theo]{Definition}
\newtheorem{coro}[theo]{Corollary}
\newtheorem{rem}[theo]{Remark}
\newtheorem{pb}[theo]{Problem}
\newtheorem{exam}[theo]{Example}
\newcommand\Inj{\operatorname{Inj}}
\newcommand\ap{\operatorname{ap}}
\newcommand\op{\operatorname{op}}
\newcommand\id{\operatorname{id}}
\newcommand\Set{\operatorname{\bf Set}}
\newcommand\Met{\operatorname{\bf Met}}
\newcommand\SMet{\operatorname{\bf SMet}}
\newcommand\Ban{\operatorname{\bf Ban}}
\newcommand\cof{\operatorname{cof}}
\newcommand\colim{\operatorname{colim}}
\newcommand\ca{\mathcal {A}}
\newcommand\eps{\mathcal {\varepsilon}}
\newcommand\cd{\mathcal {D}}
\newcommand\cf{\mathcal {F}}
\newcommand\ce{\mathcal {E}}
\newcommand\ck{\mathcal {K}}
\newcommand\cl{\mathcal {L}}
\newcommand\cs{\mathcal {S}}
\date{October 24, 2016}
\begin{document}
\title[Approximate injectivity]
{Approximate injectivity}
\author[J. Rosick\'y and W. Tholen]
{J. Rosick\'y$^{*}$ and W. Tholen$^{**}$}
\thanks{$^{*}$Supported by the Grant Agency of the Czech Republic under the grant 
              P201/12/G028. 
              $^{**}$Supported by the Natural Sciences and Engineering Research Council of Canada under the Discovery Grants program. } 
\address{
\newline J. Rosick\'{y}\newline
Department of Mathematics and Statistics\newline
Masaryk University, Faculty of Sciences\newline
Kotl\'{a}\v{r}sk\'{a} 2, 611 37 Brno, Czech Republic\newline
rosicky@math.muni.cz
\newline
W. Tholen\newline
Department of Mathematics and Statistics\newline
York University, Faculty of Science\newline
4700 Keele Street, Toronto, Ontario M3J 1P3, Canada\newline
tholen@mathstat.yorku.ca
}
 
\begin{abstract}
In a locally $\lambda$-presentable category, with $\lambda$ a regular cardinal, classes of objects that are injective with respect to a family of morphisms whose domains and codomains are $\lambda$-presentable, are known to be characterized by their closure under products, $\lambda$-directed colimits and $\lambda$-pure subobjects. Replacing the strict commutativity of diagrams by ``commutativity up to $\eps$", this paper provides an ``approximate version" of this characterization for categories enriched over metric spaces. It entails a detailed discussion of the needed $\eps$-generalizations of the notion of $\lambda$-purity. The categorical theory is being applied to the locally $\aleph_1$-presentable category of Banach spaces and their linear operators of norm at most 1, culminating in a largely categorical proof for the existence of the so-called Gurarii Banach space.
\end{abstract} 
\keywords{$\Met$-enriched category, locally $\lambda$-presentable category, $\eps$-(co)limit, $\lambda$-$\eps$-pure morphism, $\eps$-injective object, approximate $\lambda$-injectivity class, Urysohn space, Gurarii space}
\subjclass{18C35, 18D20, 46M15, 46B04, 54E25}

\maketitle
\section{Introduction}
Recall that an object $K$ is \textit{injective} to a morphism $f:A\to B$ in a category $\ck$ if, for every morphism $g:A\to K$, there is a morphism
$h:B\to K$ with $hf=g$. There is a well-developed theory of injectivity in locally presentable categories (see \cite{AR}), playing an important role
in both algebra and topology. This theory applies to Banach spaces, too, because the category $\Ban$ of (real or complex) Banach spaces and their linear operators 
of norm at most 1 is locally $\aleph_1$-presentable (see \cite{AR}, 1.48). But in this category there is another --and probably more important-- concept, of so-called {\em approximate
injectivity} (see \cite{L}), which is based on the fact that $\Ban$ is enriched over metric spaces. The basic idea is to replace the commutativity of diagrams
by their $``$commutativity up to $\eps$". The aim of our paper is to develop a theory of approximate injectivity in metric enriched categories analogously
to that of injectivity in ordinary locally presentable categories, and apply it to $\Ban$. In particular, we present a largely categorical existence proof for the {\em Gurarii space}, which attracted renewed attention in several recent papers (see, for example, 
 \cite{ASCGM, GK, K2}).

Let us first clarify the categorical context of this paper in as concrete terms as possible. The category $\Met$ of  metric spaces and non-expansive maps is neither complete nor complete, and the tensor product $X\otimes Y$, which for $X=(X,d), Y=(Y,d')$ puts the $+$-metric $d\otimes d'((x,y),(x',y'))=d(x,x')+d'(y,y')$ on $X\times Y$, fails to make $\Met$ monoidal closed. (Note that $X\otimes Y$ must not be confused with the Cartesian product $X\times Y$ in $\Met$, which is given by the max-metric.) 
One therefore enlarges $\Met$ to the category $\Met_{\infty}$ of {\em generalized metric spaces}, by allowing distances to be $\infty$ while keeping all other requirements, including the type of morphisms. Then $\Met_{\infty}$ is complete and cocomplete and monoidal closed, with the internal hom
providing the hom-set  $\Met_{\infty}(X,Y)$ with the sup-metric 
 $d''(f,g)=\sup\{d'(fx,gx)\;|\;x\in X\}$. In what follows, we will normally denote the (generalized) metric of a space by $d$, using annotations or variations only for the sake of clarity. 
 
Throughout most of this paper, we will be considering a $\Met_{\infty}${\em -enriched category} $\ck$. Hence, $\ck$ has a class ${\rm ob}\ck$ of objects, and, for all $A,B,C\in{\rm ob}\ck$, there are hom-objects $\ck(A,B)$ in $\Met_{\infty}$ with non-expansive composition maps $\ck(B,C)\otimes\ck(A,B)\to\ck(A,C)$ and units $1\to \ck(A,A)$ (where 1 is a one-point metric space), satisfying the expected associativity and unity conditions. Interpretation of a $\Met_{\infty}$-arrow $1 \to \ck(A,B)$ as a morphism $A\to B$ defines the underlying ordinary category $\ck_0$ of $\ck$, which must be carefully distinguished from $\ck$ (see \cite{Kelly1} for details). Often we will nevertheless call a morphism in $\ck_0$ a morphism in $\ck$. Should all hom-objects $\ck(A,B)$ of the $\Met_{\infty}$-enriched category $\ck$ happen to be ordinary metric spaces, we will allow ourselves to briefly call $\ck$ a $\Met$-{\em enriched category}.
 
Our principal examples of $\Met$-enriched categories arise from concrete categories $\ck$ over $\Met$, so that one has a faithful functor $U:\ck\to\Met$. Then the hom-set $\ck(A,B)$ can be considered a subspace of $\Met_{\infty}(UA,UB)$, and the composition maps of $\ck$ remain non-expansive. 
The example of primary interest in this context is the category $\Ban$, to be considered as a concrete category over $\Met$ via the unit ball functor $U:\Ban\to\Met$,  given by  $\Ban(l_1(1),-)$,
where $l_1$ is the left adjoint to the unit ball functor $\Ban\to\Set$. 
 
We have to carefully distinguish between limits in $\ck_0$ and \textit{(conical) limits} in $\ck$, the latter being limits of the former type ({\em i.e.}, limits in $\ck_0$) that are preserved by all representables $\ck(K,-):\ck_0\to\Met_{\infty}$ (see \cite{Kelly1}, section 3.8). For concrete $\Met$-categories, these are limits preserved by 
$U:\ck\to\Met$. Likewise, \textit{conical colimits} in $\ck$ are colimits in $\ck_0$ preserved by the representables $\ck(-,K):\ck_0^{\op}\to\Met$, a property which, 
 for concrete $\Met$-categories, reduces to the preservation of colimits by $U$. 

Recall that, for a regular cardinal $\lambda$, an object $K$ of the ordinary category $\ck_0$ is $\lambda$-{\em presentable} if its representable functor $\ck_0(K,-):\ck_0\to \Set$ preserves $\lambda$-directed colimits. $\ck_0$ is {\em $\lambda$-accessible} if it has all $\lambda$-directed colimits and a set $\ca$ of $\lambda$-presentable objects such that every object in $\ck_0$ is a $\lambda$-directed colimit of objects in $\ca$; if all small colimits exist, $\ck_0$ is {\em locally $\lambda$-presentable}. Accessible (locally presentable) means $\lambda$-accessible (locally $\lambda$-presentable, respectively) for some $\lambda$.
The underlying ordinary category of $\Met_{\infty}$ is locally $\aleph_1$-presentable (see \cite{LR}, 4.5(3)), and for $\lambda$ an uncountable regular cardinal, a generalized metric space $X$ is $\lambda$-presentable if, and only if, $|X|<\lambda$. An object $K$ in a $\Met_{\infty}$-enriched category $\ck$ is 
$\lambda$-{\em presentable} (in the enriched sense) if $\ck(K,-):\ck\to\Met_{\infty}$ preserves $\lambda$-directed colimits. Again, the enriched notion must be distinguished from the ordinary notion of $K$ being $\lambda$-presentable in $\ck_0$, 
which postulates only that the $\Set$-valued functor $\ck(K,-):\ck_0\to\Set$ preserve $\lambda$-directed colimits. Since, for $\lambda$ uncountable, the forgetful functor 
$V:\Met_{\infty}\to\Set$ preserves $\lambda$-directed colimits, in this case $\lambda$-presentability of an object in $\ck$ implies its $\lambda$-presentability in $\ck_0$. But since $V$ does not create $\lambda$-directed colimits, the converse statement generally fails. However, following \cite{Kelly2} 5.5 and 7.4, it does hold for $\ck=\Met_{\infty}$, as well as in the case $\ck=\Ban$ when $\lambda$ is uncountable, as  we confirm now with the following Lemma.

\begin{lemma}\label{le1.1} 
Let $\lambda$ be an uncountable regular cardinal. Then any Banach space $\lambda$-presentable in $\Ban_0$ is $\lambda$-presentable in $\Ban$.
\end{lemma}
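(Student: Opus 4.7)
The plan is to verify the enriched $\lambda$-presentability of $K$ directly: given any $\lambda$-directed diagram $(A_i,a_{ij})_{i\in I}$ in $\Ban$ with colimit cocone $a_i\colon A_i\to A$, the canonical comparison map $\colim_i\Ban(K,A_i)\to\Ban(K,A)$ in $\Met_{\infty}$ must be an isomorphism. Since $\lambda$ is uncountable, the forgetful functor $V\colon\Met_{\infty}\to\Set$ preserves $\lambda$-directed colimits, so the assumption that $K$ is $\lambda$-presentable in $\Ban_0$ already delivers bijectivity on underlying sets. It thus remains only to check that this bijection is an isometry.

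To compute the two distances, pick $f,g\colon K\to A$ and use $\lambda$-presentability of $K$ in $\Ban_0$ together with $\lambda$-directedness of $I$ to find $k\in I$ and $f'',g''\colon K\to A_k$ with $a_kf''=f$, $a_kg''=g$. In $\Ban$, $\lambda$-directed colimits are obtained by equipping the algebraic directed colimit with the (decreasing) seminorm $\|a_j(y)\|=\inf_{j'\ge j}\|a_{jj'}(y)\|$ and taking the Hausdorff completion; the net is decreasing because every transition $a_{jj'}$ is a contraction. Consequently the distance in $\Ban(K,A)$ is
$$d(f,g)=\sup_{\|x\|\le 1}\inf_{k'\ge k}\|a_{kk'}(f''x-g''x)\|,$$
while the distance in $\colim_i\Ban(K,A_i)$ (formed as a directed colimit in $\Met_{\infty}$) is
$$\inf_{k'\ge k}\sup_{\|x\|\le 1}\|a_{kk'}(f''x-g''x)\|.$$

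The crux, and the one real obstacle, is interchanging this $\inf$ and $\sup$. I would exploit that $\lambda$-presentability of $K$ in $\Ban_0$ forces its density character to be $<\lambda$; hence one can choose a dense subset $D$ of the closed unit ball of $K$ with $|D|<\lambda$, and by continuity both suprema may be restricted to $D$. For $\eps>0$ and each $x\in D$, pick $k_x\ge k$ with $\|a_{kk_x}(f''x-g''x)\|<\inf_{k'\ge k}\|a_{kk'}(f''x-g''x)\|+\eps$. Because $|D|<\lambda$, the $\lambda$-directedness of $I$ supplies a single $k^*\in I$ above every $k_x$, and then monotonicity of $k'\mapsto\|a_{kk'}(y)\|$ gives
$$\sup_{x\in D}\|a_{kk^*}(f''x-g''x)\|\le d(f,g)+\eps,$$
so $\inf_{k'}\sup_{x\in D}\le d(f,g)+\eps$. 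Since $\eps>0$ is arbitrary, combined with the trivial inequality $\sup\inf\le\inf\sup$ this yields equality of the two distances and completes the proof.

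The entire argument pivots on this swap, which depends essentially on both hypotheses: the density-character bound $|D|<\lambda$ coming from ordinary $\lambda$-presentability of $K$, and the $\lambda$-directedness of the index set used to find a common upper bound $k^*$ for the family $\{k_x\}_{x\in D}$. Everything else is routine bookkeeping about the construction of colimits in $\Ban$ and in $\Met_{\infty}$.
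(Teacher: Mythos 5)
Your proof is correct and follows essentially the same route as the paper's: both reduce the set-level statement to the fact that $V\colon\Met_{\infty}\to\Set$ preserves $\lambda$-directed colimits for $\lambda$ uncountable, and both establish the isometry claim (the interchange of $\inf$ and $\sup$) by restricting to a dense subset of cardinality $<\lambda$ and using $\lambda$-directedness to find a common upper index. Your $\eps$-approximation argument merely spells out in more detail what the paper compresses into the assertion that the pointwise and global infima are attained at some $i\in I$.
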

\begin{proof}
For an uncountable regular cardinal $\lambda$, a Banach space $A$ is $\lambda$-presentable in $\Ban_0$ if and only if it has a dense subset
of cardinality $<\lambda$. Consider a $\lambda$-directed colimit $(b_i:B_i\to B)_{i\in I}$ in $\Ban$ and a Banach space $A$ which is  $\lambda$-presentable in $\Ban_0$. 
We have to show that $\Ban(A,b_i):\Ban(A,B_i)\to\Ban(A,B)$ is a $\lambda$-directed colimit in $\Met$. Clearly, $V$ sends this cocone to a $\lambda$-directed colimit in $\Set$. Consider $f,g:A\to B$. It remains to be shown that $d(f,g)=\inf d(f_i,g_i)$, where $f=b_if_i$ and $g=b_ig_i$. Since $\lambda$ is uncountable, 
for each $a\in A$ there is $i\in I$ such that $d(fa,ga)=d(f_ia,g_ia)$. Since $A$ has a dense subset of cardinality $<\lambda$, there is $i\in I$ such that
$d(f,g)=d(f_i,g_i)$.
\end{proof}

In the following section we briefly introduce the framework of $\eps$-commutativity (= ``commutativity up to $\eps$") in $\Met_{\infty}$-enriched categories, as well as the ensuing concept of $\eps$-(co)limit. Having presented $\eps$-versions of the notion of pure subobject in Section 3, we proceed to give sufficient conditions for a class of objects in a locally presentable $\Met_{\infty}$-enriched 
category to be an $\eps$-injectivity class (Theorem \ref{th4.8}), which leads us to a full characterization of approximate injectivity (= $\eps$-injectivity, for all $\eps>0$) classes, in terms of their closure under products, directed colimits and appropriately generalized pure subobjects (Theorem \ref{th5.5}). The last section is devoted to presenting a categorical framework for constructing the Urysohn metric space and the Gurarii Banach space.

\section{$\varepsilon$-homotopy}
For any $\eps \in[0,\infty]$ and morphisms $f,g:A\to B$ in a $\Met_{\infty}$-enriched category $\ck$, we say that $f$ is $\eps${\em -homotopic} (or $\eps${\em -close} \cite{K}) to $g$, if $d(f,g)\leq\eps$ in the generalized metric $d$ of $\ck(A,B)$; we write
$$
f\sim_\eps g\Leftrightarrow d(f,g)\leq\eps.
$$
$f:A\to B$ is an $\varepsilon${\em -homotopy equivalence} if there exists $f':B\to A$ with $f'f\sim_\varepsilon\id_A$ and $ff'\sim_\varepsilon\id_B$. 
(This concept is related to the Gromov-Hausdorff distance of $A$ and $B$; see \cite{L}, 2.4.)

The relation $\sim_\varepsilon$ is preserved by composition from either side, and it is reflexive and symmetric, but generally not transitive; rather, one has
 the obvious {\em transitivity rule for $\eps$-homotopy}, which just rephrases the triangle inequality:
\[f\sim_\eps g \quad {\rm{and}} \quad g\sim_\delta h \Longrightarrow f\sim_{\eps+\delta} h.\]
$\eps${\em -commutativity} of diagrams in $\ck$ has the obvious meaning. For example, to say that
$$
\xymatrix@=4pc{
B \ar[r]^{\overline{g}} & D \\
A\ar [u]^{f} \ar [r]_{g} & C \ar[u]_{\overline{f}}
}
$$
is an $\eps$-commutative square simply means
$\overline{f}g\sim_\varepsilon\overline{g}f$.

\begin{rem}\label{re2.1}
{\em
Our motivation for using the homotopic terminology arises from the case $\ck=\Met_{\infty}$, as follows.
For $\eps>0$, let $2_\varepsilon$ be the space with two points whose distance is $\varepsilon$, and we put $2_0=1$.
Then $2_\eps$ is not finitely presentable because it is
a colimit of the chain formed by the spaces $2_{\eps+\frac{1}{n}}$.
With the injection
$$
i_\varepsilon: 1+1\to 2_\varepsilon
$$
we get the weak factorization system $(\cof(i_\varepsilon),i_\varepsilon^\square)$ (see \cite{AHRT}). Clearly, for any morphism $h$ in $\Met_{\infty}$, one has $i_\varepsilon\square h$ if, and only if,  $d(hx,hy)\leq\varepsilon$
implies that $d(x,y)\leq\varepsilon$ for all $x,y$ in the domain of $h$, and since $h$ is non-expansive, we have the converse implication too. 
Consequently, $2_\varepsilon$ is the induced cylinder object, {\em i.e}., it is given by a weak factorization of the codiagonal
$$
\nabla : 1+1 \xrightarrow{\quad  c_\varepsilon\quad} 2_\varepsilon
             \xrightarrow{\quad s_\varepsilon\quad} 1
$$
(see \cite{KR}). In general, for a space $K$, the cylinder object $C_K$ is given by a weak factorization
$$
\nabla : K+K \xrightarrow{\quad  c_K\quad} C_K
             \xrightarrow{\quad s_K\quad} K.
$$
Then, for morphisms $f,g:K\to L$ to admit a morphism $h: C_K \to L$ such that  
$$
\xymatrix@=3pc{
K+K \ar[rr]^{(f,g)} 
\ar[dr]_{c_K} && L\\
&C_K \ar[ur]_h
}
$$
commutes means precisely that  $f$ and $g$ are $\varepsilon$-homotopic. Indeed,  $K+K$ is obtained from $K$ by duplicating each $x\in K$ to $x'$ and $x''$ and putting $d(x',x'')=\infty$ while $d(x',x'')=\varepsilon$ in $C_K$.
   
We note that the sup-metric $d(f,g)=\sup\{d(fx,gx)\;|\;x\in K\}$ of $\Met(K,L))$ may be recovered from the $\eps$-homotopy relation, as
\[d(f,g)=\inf\{\eps\geq0\;|\; f\sim_\eps g\}.\]
}
\end{rem}

\begin{defi}\label{def2.2}
{
\em 

An {\em $\varepsilon$-pushout} of morphisms $f:A\to B,\; g:A\to C$ in a 
$\Met_{\infty}$-enriched category $\ck$ 
 is given by an $\varepsilon$-commutative square 
$$
\xymatrix@=4pc
{
B \ar[r]^{\overline{g}} & D \\
A\ar [u]^{f} \ar [r]_{g} & C \ar[u]_{\overline{f}}
}
$$
such that, for any $\varepsilon$-commutative square
$$
\xymatrix@=4pc{
B \ar[r]^{g'} & D' \\
A\ar [u]^{f} \ar [r]_{g} & C\,, \ar[u]_{f'}
}
$$
there is a unique morphism $t:D\to D'$ such that $t\overline{f}=f'$ and $t\overline{g}=g'$. An {\em $\eps$-coequalizer} of a pair of parallel morphisms is defined likewise. In the presence of coproducts we define the {\em $\eps$-colimit} of a diagram $D$ in $\ck$ as the $\eps$-coequalizer of the standard pair of morphisms between coproducts of the objects of the diagram that one uses to construct the (ordinary) colimit of $D$ from coproducts and coequalizers.
 }
 \end{defi}
 
 Up to isomorphism, $\eps$-colimits are uniquely determined; we denote the $\eps$-colimit of $D$ by $\colim_\eps D$. 0-colimits are simply colimits.
In case of a discrete diagram, the $\eps$-notion of colimit coincides with the ordinary one, for every $\eps\in[0,\infty]$:  $\eps$-coproducts are precisely coproducts.

\begin{lemma}\label{le2.3}
$\Met_{\infty}$ has $\varepsilon$-pushouts.
\end{lemma}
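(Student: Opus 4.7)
The plan is to construct the $\varepsilon$-pushout of $f: A \to B$ and $g: A \to C$ directly as a zig-zag metric on the disjoint union, softening the standard construction of the ordinary pushout in $\Met_{\infty}$. I would take $D$ to be $B \sqcup C$ as a set, with $\overline{g}: B \to D$ and $\overline{f}: C \to D$ the canonical injections, and equip $D$ with the generalized metric
\[
d_D(x,y) \;=\; \inf \sum_{i=0}^{n-1} \omega(z_i, z_{i+1}),
\]
where the infimum ranges over all finite sequences $x = z_0, z_1, \dots, z_n = y$ in $D$ and the step-cost $\omega(z,w)$ equals $d_B(z,w)$ if $z,w\in B$, equals $d_C(z,w)$ if $z,w\in C$, equals $\varepsilon$ if $\{z,w\}=\{f(a),g(a)\}$ for some $a\in A$, and equals $+\infty$ otherwise.

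Next I would verify that $d_D$ is a generalized metric. Symmetry, $d_D(x,x)=0$, and the triangle inequality are immediate from the definition. For the separation axiom, I would argue by cases on which side of $D$ the endpoints lie in: a path from $x\in B$ to $y\in C$ incurs at least one $\varepsilon$-step, while any path between two distinct points on the same side either stays there (cost at least the intrinsic distance) or makes an excursion crossing at least twice, contributing at least $2\varepsilon$; when $\varepsilon>0$ this bounds $d_D(x,y)$ away from $0$, and the limiting case $\varepsilon=0$ is the ordinary pushout. Since $d_D$ never exceeds the intrinsic distance on either side, $\overline{g}$ and $\overline{f}$ are non-expansive, and the one-step path $f(a)\to g(a)$ gives $d_D(\overline{g}f(a),\overline{f}g(a))\le\varepsilon$, so the square $\varepsilon$-commutes.

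For the universal property, given an $\varepsilon$-commutative square with maps $g':B\to D'$ and $f':C\to D'$, the factorization $t:D\to D'$ is forced on underlying sets by $t|_B=g'$ and $t|_C=f'$, so uniqueness is automatic. To show $t$ is non-expansive it suffices, for any zig-zag $x=z_0,\dots,z_n=y$ in $D$, to bound $d_{D'}(t(x),t(y)) \le \sum_i d_{D'}(t(z_i),t(z_{i+1}))$ termwise by $\omega(z_i,z_{i+1})$: for same-side steps this uses non-expansiveness of $g'$ or $f'$, and for the bridging steps it uses the hypothesis $d_{D'}(g'f(a),f'g(a))\le\varepsilon$. Taking the infimum over paths yields $d_{D'}(t(x),t(y)) \le d_D(x,y)$.

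The main technical point is really the separation verification for $d_D$; once that case analysis is carried out, the rest is a direct transcription of the standard quotient-metric construction of pushouts in $\Met_{\infty}$, with the strict identification $f(a)=g(a)$ replaced by the soft bound $d_D(f(a),g(a))\le\varepsilon$.
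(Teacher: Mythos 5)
Your construction is essentially the paper's own proof: the paper forms the coproduct $B+C$, resets the distances $d(fx,gx)$ to $\varepsilon$ to get a semimetric, and reflects into $\Met_{\infty}$, where the reflector is given by exactly the shortest-path (zig-zag) formula you write down. You simply unfold that reflection explicitly and verify the separation axiom and the universal property by hand, which the paper delegates to the reflectivity of $\Met_{\infty}$ in $\SMet_{\infty}$; both arguments produce the same object and are correct.
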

\begin{proof}
Since $\Met_{\infty}$ has pushouts, there is nothing to be shown in case $\eps=0$. For $\eps>0$, consider $f:A\to B$ and $g:A\to C$. In the coproduct $B+ C$ we have $d(fx,gx)=\infty$ for all $x\in A$. Changing all distances $d(fx,gx)$
to $\varepsilon$ gives a distance function that satisfies all axioms of a generalized metric but the triangle inequality. Such structures are called semimetrics and, following \cite{LR} 4.5(3), $\Met_{\infty}$ is reflective in the corresponding category $\SMet_{\infty}$: the reflector provides a semimetric space $(X,d)$ with the metric $\overline{d}$ given by

$$\overline{d}(x,z)={\rm inf}\{\sum_{i=0}^{n-1}d(y_i,y_{i+1})\,|\, n\geq1, y_i\in X, y_0=x, y_n=z\}.$$
It now suffices to take the reflection $D$ of the resulting semimetric space to $\Met_{\infty}$ to obtain an $\varepsilon$-pushout in $\Met_{\infty}$.
\end{proof}

\begin{coro}\label{cor2.4}
$\Met_{\infty}$  has $\eps$-colimits.
\end{coro}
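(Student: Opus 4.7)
The plan is to reduce everything to the existence of $\eps$-coequalizers. By Definition \ref{def2.2}, an $\eps$-colimit is built from coproducts and a single $\eps$-coequalizer; since $\Met_\infty$ is cocomplete in the ordinary sense, coproducts are available, so it suffices to produce $\eps$-coequalizers of an arbitrary parallel pair $f,g:A\to B$. The case $\eps=0$ is just ordinary coequalizers, which $\Met_\infty$ has, so the focus is on $\eps>0$.

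For $\eps>0$ I would mimic the semimetric-reflection argument in the proof of Lemma \ref{le2.3}. Keep the underlying set of $B$, and change its generalized metric $d_B$ to $d'$ by setting $d'(fa,ga)=\min\{d_B(fa,ga),\eps\}$ for every $a\in A$ and leaving the remaining distances untouched. The result is symmetric, reflexive, and vanishes on the diagonal, hence a semimetric in the sense of Lemma \ref{le2.3}, though the triangle inequality may fail through the newly introduced shortcuts. The $\Met_\infty$-reflection $q:(B,d')\to(Q,\overline{d'})$ obtained from the infimum-over-paths formula of Lemma \ref{le2.3} then satisfies $\overline{d'}(qfa,qga)\leq d'(fa,ga)\leq\eps$, so that $qf\sim_\eps qg$. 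For the universal property, if $q':B\to D'$ satisfies $q'f\sim_\eps q'g$, then non-expansivity of $q'$ for $d_B$ together with $d_{D'}(q'fa,q'ga)\leq\eps$ forces $d_{D'}(q'fa,q'ga)\leq d'(fa,ga)$, so $q'$ is non-expansive for $d'$. The universal property of the reflection $\SMet_\infty\to\Met_\infty$ then yields a unique non-expansive $t:Q\to D'$ with $tq=q'$, completing the $\eps$-coequalizer construction. Combined with coproducts, this produces $\eps$-colimits of arbitrary small diagrams.

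I expect the only mild obstacle to be the bookkeeping of the modified distance when the same unordered pair $\{u,v\}$ arises from several elements of $A$, or when some $d_B(fa,ga)$ is already smaller than $\eps$; taking $\min$ consistently resolves both issues, and the rest is a transcription of the reflector argument from Lemma \ref{le2.3}.
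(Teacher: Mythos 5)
Your proposal is correct and rests on the same mechanism as the paper's proof: both reduce $\eps$-colimits to coproducts plus $\eps$-coequalizers, and both ultimately obtain the latter from the semimetric-reflection construction of Lemma \ref{le2.3}. The only (inessential) difference is packaging: the paper realizes the $\eps$-coequalizer of $f,g:A\to B$ as the $\eps$-pushout of $(f,\id_B),(g,\id_B):A+B\to B$ and invokes Lemma \ref{le2.3} as a black box, whereas you inline the reflection argument directly on $B$, with the $\min\{d_B(fa,ga),\eps\}$ adjustment correctly accounting for the fact that $fa$ and $ga$ now live in the same space.
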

\begin{proof}
An $\eps$-coequalizer
$$
\xymatrix@1{
A\ \ar@<0.6ex>[r]^{f}\
 \ar@<-0.6ex>[r]_g\ &\ B\ \ar [r]^h\ &\ D
}
$$
may be given by an $\eps$-pushout
$$
\xymatrix@C=3pc@R=3pc{
B \ar [r]^h  & D\\
A+ B \ar [u]^{(f, \id_B)} \ar [r]_{(g, \id_B)} &
B,\ar [u]_h
}
$$
and $\eps$-colimits are constructed 
with the help of coproducts and $\eps$-coequalizers. 
\end{proof}

\begin{propo}\label{prop2.5}
$\Ban$ has $\eps$-colimits.
\end{propo}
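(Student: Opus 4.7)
The plan is to follow the template of Corollary \ref{cor2.4}: since $\Ban$ is cocomplete (being locally $\aleph_1$-presentable), coproducts are available, and $\eps$-colimits can be built from coproducts and $\eps$-coequalizers, while $\eps$-coequalizers are obtainable as $\eps$-pushouts exactly as displayed in that corollary. Hence the problem reduces to constructing $\eps$-pushouts in $\Ban$.

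For morphisms $f:A\to B$ and $g:A\to C$ in $\Ban$ with $\eps>0$, I would proceed in the spirit of the semimetric reflection used in Lemma \ref{le2.3}. Let $A_\eps$ denote the Banach space with underlying vector space $A$ and rescaled norm $\eps\|\cdot\|_A$, and form the $l_1$-coproduct $E=B\oplus_1 C\oplus_1 A_\eps$ in $\Ban$. The linear map $A\to E$ sending $a\mapsto (fa,-ga,-a)$ satisfies $\|(fa,-ga,-a)\|_E\geq\eps\|a\|_A$, so it is bounded below and hence has closed image $N\subseteq E$. Set $D:=E/N$, and let $\bar{g}:B\to D$ and $\bar{f}:C\to D$ be the compositions of the coproduct injections $u_B,u_C$ with the quotient map $\pi:E\to D$.

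The verifications to perform are then routine. Both $\bar{f}$ and $\bar{g}$ have norm at most $1$, and in $D$ one has
$$\bar{g}f(a)-\bar{f}g(a)=\pi(fa,-ga,0)=\pi(0,0,a),$$
whose norm is at most $\eps\|a\|_A$, so the square is $\eps$-commutative. For the universal property, given any $\eps$-commutative cocone $g':B\to D'$, $f':C\to D'$ with $\|g'f-f'g\|\leq\eps$, define $h:E\to D'$ by
$$h(b,c,a)=g'(b)+f'(c)+(g'f-f'g)(a).$$
The $\eps$-commutativity hypothesis yields $\|h(b,c,a)\|\leq\|b\|+\|c\|+\eps\|a\|_A=\|(b,c,a)\|_E$, so $\|h\|\leq 1$; and $h$ vanishes on $N$ by direct computation, hence factors uniquely through a morphism $t:D\to D'$ with $t\bar{g}=g'$ and $t\bar{f}=f'$. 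The case $\eps=0$ is covered by the ordinary pushout of $\Ban$.

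The only real subtlety is the norm bound on $h$: it relies crucially on the $\eps$-commutativity hypothesis to control the $A_\eps$-component, which is the precise Banach-space incarnation of the semimetric reflection used for $\Met_\infty$ in Lemma \ref{le2.3}. Everything else is linear-algebraic bookkeeping.
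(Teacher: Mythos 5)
Your proof is correct, but it takes a genuinely different route from the paper's. The paper argues abstractly: using Lemma \ref{le1.1} it exhibits $\Ban$ as a reflective full subcategory of a presheaf category $\Met^{\cs^{\op}}$ (indexed by separable spaces), observes that the latter has pointwise $\eps$-colimits by Corollary \ref{cor2.4}, and obtains the $\eps$-colimit in $\Ban$ as the reflection of the $\eps$-colimit of the embedded diagram. You instead construct $\eps$-pushouts in $\Ban$ by hand, as the quotient of the $\ell_1$-sum $B\oplus_1 C\oplus_1 A_\eps$ by the closed subspace $N=\{(fa,-ga,-a)\}$, and then invoke the paper's own reduction of $\eps$-colimits to coproducts and $\eps$-coequalizers (Definition \ref{def2.2} and Corollary \ref{cor2.4}). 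Your verifications check out: the map $a\mapsto(fa,-ga,-a)$ is bounded below by $\eps\|\cdot\|_A$, so $N$ is closed; the square is $\eps$-commutative because $\pi(fa,-ga,0)=\pi(0,0,a)$ has norm at most $\eps\|a\|$; the comparison map $h$ is contractive precisely because of the hypothesis $\|g'f-f'g\|\le\eps$; and uniqueness of $t$ holds because $D$ is spanned by the images of $\bar f$ and $\bar g$ (indeed $\pi(b,c,a)=\bar g(b+fa)+\bar f(c-ga)$). Your construction is the exact Banach-space analogue of the semimetric reflection in Lemma \ref{le2.3}, and it buys an explicit formula for $\eps$-pushouts in $\Ban$, self-contained and independent of Lemma \ref{le1.1}; the paper's argument is shorter given the machinery already in place and illustrates a general transfer principle for reflective subcategories. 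The only (cosmetic) loose end in your version is the value $\eps=\infty$, where $A_\eps$ is not defined; there the $\eps$-pushout is simply the coproduct $B\oplus_1 C$ (in fact every square in $\Ban$ is $\eps$-commutative once $\eps\ge 2$).
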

\begin{proof}
Let $\cs$ be a representative full subcategory of separable Banach spaces. Consider the functor
$$
E:\Ban\to\Met^{\cs^{\op}}
$$
given by $(EK)(A)=\Ban(K,A)$. The functor $E$ preserves limits and, following \ref{le1.1}, it preserves $\aleph_1$-directed colimits.
Since $E$ is a full embedding, it makes $\Ban$ a reflective full subcategory of $\Met^{\cs^{\op}}$. Following \ref{cor2.4}, $\Met^{\cs^{\op}}$
has $\eps$-colimits calculated pointwise. Given an $\eps$-diagram $D:\cd\to\Ban$, its $\eps$-colimit is given by a reflection of the $\eps$-colimit
of $ED$.
\end{proof}

\begin{rem}\label{re2.6} 
{
\em 
For a diagram 
$D$ in a $\Met_{\infty}$-enriched category $\ck$ with the needed ($\eps$-)colimits one has canonical morphisms
$$
\coprod_{i\in\cd}Di\simeq\colim_{\infty}D\to\colim_{\eps}D\to\colim_0D\simeq\colim D,
$$
with the morphisms 
$$
q_{\eps}:\colim_\eps D\to \colim D\quad(\eps>0)
$$ 
presenting $\colim D$ as a colimit of the chain $(\colim_{\eps}D\to\colim_{\delta}D)_{\eps\geq\delta>0}$.
}
\end{rem}

\section{$\varepsilon$-purity}

The notion of $\lambda$-pure morphism in a locally $\lambda$-presentable category as given in \cite{AR} allows for an obvious generalization in the case of a $\Met_{\infty}$-enriched category, as follows. The latter notion entails the former when one puts $\eps=0$.
\begin{defi}\label{def3.1}
{
\em
Let $\ck$ be a $\Met_{\infty}$-enriched category and $\lambda$ a regular cardinal.
We say that a morphism $f:K\to L$ is $\lambda$-$\varepsilon$-\textit{pure} if for any $\varepsilon$-commutative square
$$
\xymatrix@=4pc{
K \ar[r]^{f} & L \\
A\ar [u]^{u} \ar [r]_{g} & B \ar[u]_{v}
}
$$
with $A$ and $B$ $\lambda$-presentable in $\ck$ there exists $t:B\to K$ such that $tg\sim_\varepsilon u$.
}
\end{defi}

\begin{rem}\label{re3.2}
{
\em
(1) A composite of $\lambda$-$\varepsilon$-pure morphisms is $\lambda$-$\varepsilon$-pure.  

(2) If $f_2f_1$ is $\lambda$-$\varepsilon$-pure, then $f_1$ is $\lambda$-$\varepsilon$-pure.

(3) Every split monomorphism is $\lambda$-$\varepsilon$-pure, for any $\lambda$. (Indeed, when $pf=\id_A$, consider the $\varepsilon$-commutative square of
\ref{def3.1}. Then $fu\sim_\eps vg$ implies $u=pfu\sim_\eps pvg$.), 

(4) Every $\lambda$-$\varepsilon$-pure morphism is $\lambda'$-$\varepsilon$-pure, for all $\lambda'\leq\lambda$.

(5) The $\lambda$-$0$-pure morphisms are precisely the $\lambda$-pure morphisms (as defined in \cite{AR}).
}
\end{rem}

Before discussing $\lambda$-$\eps$-purity further, let us also consider some variations of the notion.

\begin{defi}\label{def3.3}
{
\em
Let $f:K\to L$ be a morphism in the $\Met_{\infty}$-enriched category $\ck$.
We say that a morphism $f:K\to L$ is \textit{weakly} (\textit{barely}) $\lambda$-$\varepsilon$-\textit{pure} if for every $\eps$-commutative (commutative) square as in \ref{def3.1}, with $A$ and $B$ $\lambda$-presentable in $\ck$, there exists $t:B\to K$ such that $tg\sim_{2\varepsilon} u$ $(tg\sim_\eps u$, respectively). We also say that $f$ is $\eps$-{\em split} if there is $p:L\to K$ in $\ck$ with $pf\sim_\eps {\rm id}_K$.  Finally, we say that $f$ is an $\eps$-monomorphism if $fg=fh$ implies $g\sim_\eps h$.
}
\end{defi}

The following easily verified statements all rely on the transitivity rule for $\eps$-homotopy:

\begin{lemma}\label{le3.4}
{\rm(1)} Every $\lambda$-$\eps$-pure morphism is weakly $\lambda$-$\eps$-pure and barely $\lambda$-$\eps$-pure.

{\rm(2)} Every weakly $\lambda$-$\eps$-pure morphism is barely $\lambda$-$2\varepsilon$-pure.

{\rm(3)} Every split monomorphism is $\eps$-split, and every $\eps$-split morphism is both, weakly and barely $\lambda$-$\eps$-pure, for any $\lambda$, and it is a $2\eps$-monomorphism.
\end{lemma}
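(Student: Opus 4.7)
The plan is to chain the transitivity rule for $\varepsilon$-homotopy with two trivial observations: every commutative square is in particular an $\varepsilon$-commutative square, and $\sim_\varepsilon$ is weaker than $\sim_{\varepsilon'}$ whenever $\varepsilon\leq\varepsilon'$. I do not expect any real obstacle here; the only point requiring care is bookkeeping of which $\varepsilon$-bound each application of transitivity yields.

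For part (1), weakly $\lambda$-$\varepsilon$-purity has a strictly weaker conclusion than $\lambda$-$\varepsilon$-purity, since $tg\sim_\varepsilon u$ immediately gives $tg\sim_{2\varepsilon} u$, so the first implication is free. For the bare variant, I would simply note that any strictly commutative square is $\varepsilon$-commutative, so the filler $t$ supplied by $\lambda$-$\varepsilon$-purity already satisfies $tg\sim_\varepsilon u$.

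For part (2), I again use the inclusion of commutative squares into $\varepsilon$-commutative ones. Applying the weakly $\lambda$-$\varepsilon$-pure property to such a square yields a filler $t$ with $tg\sim_{2\varepsilon} u$, which is precisely what barely $\lambda$-$2\varepsilon$-purity requires.

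For part (3), a split monomorphism $pf=\id_K$ is trivially $\varepsilon$-split. Now assume $pf\sim_\varepsilon \id_K$. Given an $\varepsilon$-commutative square as in Definition 3.1 (so $fu\sim_\varepsilon vg$), I would set $t:=pv$ and compute $tg = pvg \sim_\varepsilon pfu \sim_\varepsilon u$, the first step by composing the hypothesis $vg\sim_\varepsilon fu$ with $p$ on the left, the second by composing $pf\sim_\varepsilon\id_K$ with $u$ on the right. Transitivity then gives $tg\sim_{2\varepsilon} u$, i.e.\ weak $\lambda$-$\varepsilon$-purity. If the square commutes strictly, then $vg=fu$ collapses the first step, leaving $tg = pfu\sim_\varepsilon u$ and yielding bare $\lambda$-$\varepsilon$-purity. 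Finally, from $fg=fh$ I would precompose with $p$ and chain $g\sim_\varepsilon pfg = pfh\sim_\varepsilon h$ to conclude $g\sim_{2\varepsilon} h$, so $f$ is a $2\varepsilon$-monomorphism.
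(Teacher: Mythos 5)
Your proposal is correct and is exactly the verification the paper intends: the lemma is stated with only the remark that all parts "rely on the transitivity rule for $\eps$-homotopy", and your bookkeeping (commutative squares are $\eps$-commutative, $\sim_\eps$ implies $\sim_{\eps'}$ for $\eps'\geq\eps$, and the filler $t=pv$ for the $\eps$-split case, mirroring Remark 3.2(3)) is the standard way to carry it out. The $\eps$-bounds in each chain of homotopies are tracked correctly, so there is nothing to add.
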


\begin{rem}\label{re3.5}
{\em
(1) Note that an $\varepsilon$-split morphism does not need to be a monomorphism, not even an $\eps$-monomorphism: in $\Met_{\infty}$, for $0<\eps<\infty$, consider $\{a,b,c\}\to 1$ with $d(a,b)=d(b,c)=\eps$ and $d(a,c)=2\eps$.
}

{\rm (2)} For $\lambda$ uncountable, every barely $\lambda$-$\eps$-pure morphism in $\Met_{\infty}$ is $2\eps$-monomorphic. \em{ Indeed, 
for $f:K\to L$ barely $\lambda$-$\varepsilon$-pure, consider $a,b\in K$ with $fa=fb$. 
With $\delta=d(a,b)$ we exploit the commutative square 
$$
\xymatrix@=4pc{
K \ar[r]^{f} & L \\
2_\delta\ar [u]^{u} \ar [r]_{g} & 1, \ar[u]_{v}
}
$$
with $u$ mapping $2_\delta$ onto $\{a,b\}$. Since $2_\delta$ and $1$ are $\lambda$-presentable,
there is $t:1\to K$ such that $tg\sim_\varepsilon u$. With $c$ the image of $t$, this forces $d(a,b)\leq d(a,c)+d(b,c)\leq 2\varepsilon$.
}
\end{rem}
 
Let us also record to which extent $\lambda$-$\eps$-purity gets transported along $\eps$-homotopy:

\begin{lemma}\label{le3.6}
Let $f\sim_\eps f'$.

{\rm (1)} If $f$ is $\lambda$-$2\eps$-pure, then $f'$ is weakly $\lambda$-$\eps$-pure.

{\rm (2)} If $f$ is $\lambda$-$\eps$-pure, then $f'$ is barely $\lambda$-$\eps$-pure.
\end{lemma}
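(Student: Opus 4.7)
The plan is to exploit the transitivity rule for $\eps$-homotopy to transfer the commutativity hypothesis from a square against $f'$ to a square against $f$, apply the purity hypothesis on $f$, and thus obtain the desired lifting.

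For part (1), suppose $f$ is $\lambda$-$2\eps$-pure and $f\sim_\eps f'$. Given an $\eps$-commutative square
$$
\xymatrix@=3pc{
K \ar[r]^{f'} & L \\
A\ar [u]^{u} \ar [r]_{g} & B \ar[u]_{v}
}
$$
with $A,B$ $\lambda$-presentable, i.e.\ $f'u\sim_\eps vg$, I would first observe that $f\sim_\eps f'$ yields $fu\sim_\eps f'u$ (since $\sim_\eps$ is preserved by composition from the right). The transitivity rule then gives $fu\sim_{2\eps}vg$, so the same square becomes $2\eps$-commutative with $f$ in place of $f'$. The $\lambda$-$2\eps$-purity of $f$ now delivers $t:B\to K$ with $tg\sim_{2\eps}u$, which is exactly what is required for $f'$ to be weakly $\lambda$-$\eps$-pure.

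For part (2), suppose $f$ is $\lambda$-$\eps$-pure and $f\sim_\eps f'$. Given a strictly commutative square $f'u=vg$ with $A,B$ $\lambda$-presentable, the hypothesis $f\sim_\eps f'$ gives $fu\sim_\eps f'u=vg$, so the square becomes $\eps$-commutative when $f'$ is replaced by $f$. Applying $\lambda$-$\eps$-purity of $f$ to this $\eps$-commutative square produces $t:B\to K$ with $tg\sim_\eps u$, establishing bare $\lambda$-$\eps$-purity of $f'$.

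There is essentially no obstacle here: both parts are direct two-step applications of the transitivity rule $f\sim_\eps g,\ g\sim_\delta h\Rightarrow f\sim_{\eps+\delta}h$ recorded in Section 2, together with right-compositional stability of $\sim_\eps$. The only subtlety is bookkeeping of which constant ($\eps$ versus $2\eps$) is consumed in each application, which is what forces the asymmetric hypotheses (one needs $2\eps$-purity of $f$ in (1) because the $\eps$-commutativity of the $f'$-square degrades to $2\eps$-commutativity of the $f$-square, while in (2) strict commutativity of the $f'$-square degrades only to $\eps$-commutativity of the $f$-square).
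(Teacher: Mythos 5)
Your proof is correct and follows exactly the route the paper intends: the lemma is left as "easily verified" via the transitivity rule for $\eps$-homotopy together with compositional stability of $\sim_\eps$, and your bookkeeping of the constants ($2\eps$ in part (1), $\eps$ in part (2)) matches the definitions of weak and bare $\lambda$-$\eps$-purity precisely.
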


In conjunction with Remark \ref{re3.2} (2), Lemma \ref{le3.6} gives:

\begin{coro}\label{co3.7}
Let $gf\sim_\varepsilon h$. 
\begin{enumerate}
\item[(1)] If $h$ is $\lambda$-$2\varepsilon$-pure, then $f$ is weakly $\lambda$-$\varepsilon$-pure.
\item[(2)] If $h$ is $\lambda$-$\varepsilon$-pure, then $f$ is barely $\lambda$-$\varepsilon$-pure.
\end{enumerate}
\end{coro}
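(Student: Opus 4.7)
The plan is to combine Lemma \ref{le3.6} with the straightforward analogue of Remark \ref{re3.2}(2) for the weak and bare variants of $\lambda$-$\varepsilon$-purity. Since $\sim_\varepsilon$ is symmetric, the hypothesis $gf\sim_\varepsilon h$ reads equally as $h\sim_\varepsilon gf$, and I would apply Lemma \ref{le3.6} to this $\varepsilon$-homotopy with its $f$ and $f'$ taken to be $h$ and $gf$, respectively. In case (1), part (1) of that lemma then yields that the composite $gf$ is weakly $\lambda$-$\varepsilon$-pure; in case (2), its part (2) gives that $gf$ is barely $\lambda$-$\varepsilon$-pure.

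Next, I would verify the extension of Remark \ref{re3.2}(2): if a composite $f_2f_1$ is weakly (respectively, barely) $\lambda$-$\varepsilon$-pure, then so is $f_1$. Given an $\varepsilon$-commutative (respectively, commutative) testing square for $f_1$ with corners $A,B$ that are $\lambda$-presentable and vertical legs $u:A\to K$, $v:B\to L$ satisfying $f_1 u\sim_\varepsilon vg$ (respectively, $f_1 u=vg$), post-composition of $v$ with $f_2$ produces a square of the same type testing $f_2f_1$. The assumed weak (respectively, bare) purity of $f_2f_1$ then supplies $t:B\to K$ with $tg\sim_{2\varepsilon}u$ (respectively, $tg\sim_\varepsilon u$), which is exactly the conclusion demanded of $f_1$. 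Taking $f_1:=f$ and $f_2:=g$ completes both (1) and (2).

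The only step warranting any thought is this extension of Remark \ref{re3.2}(2) to the weak and bare variants, but because the witnessing morphism $t$ is used without modification and the testing square is merely enlarged along its right edge, no real obstacle arises; the argument reduces to a brief bookkeeping exercise once Lemma \ref{le3.6} is in hand.
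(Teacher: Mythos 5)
Your proposal is correct and follows exactly the route the paper intends: apply Lemma \ref{le3.6} to the homotopy $h\sim_\varepsilon gf$ to transfer (weak, resp.\ bare) purity to the composite $gf$, and then cancel $g$ via the evident weak/bare analogue of Remark \ref{re3.2}(2), which you rightly verify by enlarging the testing square along its right edge with $g$. The paper states the corollary as an immediate consequence of those two facts without writing out the details, so your write-up simply makes explicit what the paper leaves implicit.
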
 
 
We are now ready to prove an important stability property of $\lambda$-$\eps$-pure morphisms:

\begin{propo}\label{prop3.8}
Let $\ck$ be a $\Met_{\infty}$-enriched category and $\lambda$ be an uncountable regular cardinal. Then $\lambda$-$\varepsilon$-pure morphisms in $\ck$ are closed under $\lambda$-directed colimits in $\ck$ .
\end{propo}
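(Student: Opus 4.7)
The plan is to lift the given $\varepsilon$-commutative square from the colimit down to a single stage of the diagram, apply $\lambda$-$\varepsilon$-purity at that stage, and then push the resulting factorization forward along the colimit injection. Let $(f_i:K_i\to L_i)_{i\in I}$ be the $\lambda$-directed diagram of $\lambda$-$\varepsilon$-pure morphisms, with colimit $f:K\to L$ in $\ck$ computed pointwise, so that $K=\colim K_i$ with injections $k_i:K_i\to K$ and $L=\colim L_i$ with injections $l_i:L_i\to L$, and $l_i f_i=f k_i$. Suppose we are given an $\varepsilon$-commutative square with $u:A\to K$, $g:A\to B$, $v:B\to L$, and $A,B$ $\lambda$-presentable.

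The first step is routine: $\lambda$-presentability of $A$ and $B$ in $\ck_0$ (which follows from their $\lambda$-presentability in $\ck$, as noted in the discussion preceding Lemma \ref{le1.1}) lets us factor $u=k_iu_i$ and $v=l_jv_j$ for some $u_i:A\to K_i$, $v_j:B\to L_j$; by $\lambda$-directedness of $I$, we may take a common index and write $u=k_iu_i$, $v=l_iv_i$.

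The crux of the argument is Step 2: showing that after moving to a sufficiently large $j\ge i$, the lifted square at level $j$ is already $\varepsilon$-commutative. Writing $u_j$, $v_j$ for the transports of $u_i$, $v_i$ along the transition morphisms, we have $l_j f_j u_j=fu$ and $l_j v_j g=vg$. Since $A$ is $\lambda$-presentable in the enriched sense, the hom $\ck(A,L)$ is the $\lambda$-directed colimit $\colim_{j\ge i}\ck(A,L_j)$ in $\Met_\infty$; in such a colimit the distance between the images of two elements originating at stage $i$ equals the infimum of the distances of their transports at stages $j\ge i$. Hence
\[
\varepsilon\;\ge\;d(fu,vg)\;=\;\inf_{j\ge i} d(f_j u_j,\,v_j g).
\]
Pick a countable sequence of stages $j_n\ge i$ realising this infimum to within $1/n$. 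Because $\lambda$ is uncountable and $I$ is $\lambda$-directed, the countable family $\{j_n\}$ has an upper bound $j^*\in I$; non-expansiveness of the transition maps then forces $d(f_{j^*}u_{j^*},v_{j^*}g)\le d(f_{j_n}u_{j_n},v_{j_n}g)$ for every $n$, hence $\le\varepsilon$. So at the stage $j^*$ we obtain a genuine $\varepsilon$-commutative square with $\lambda$-presentable top corners.

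Step 3 is then immediate: since $f_{j^*}$ is $\lambda$-$\varepsilon$-pure and $A,B$ are $\lambda$-presentable, there exists $t:B\to K_{j^*}$ with $tg\sim_\varepsilon u_{j^*}$. Setting $\tilde t=k_{j^*}t:B\to K$ and using that composition in the $\Met_\infty$-enriched category is non-expansive together with $u=k_{j^*}u_{j^*}$, we get
\[
d(\tilde t g,\,u)\;=\;d(k_{j^*}tg,\,k_{j^*}u_{j^*})\;\le\;d(tg,\,u_{j^*})\;\le\;\varepsilon,
\]
so $\tilde t g\sim_\varepsilon u$, as required. The main obstacle is precisely Step 2: attaining the infimum $d(fu,vg)\le\varepsilon$ at a single stage. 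This is exactly where the hypothesis that $\lambda$ is uncountable enters — without $\sigma$-directedness of $I$ the infimum could remain strictly below every stage-wise distance and the lifted square would only be $(\varepsilon+\delta)$-commutative, costing us an unavoidable $\delta$ in the final estimate and so yielding only weak $\lambda$-$\varepsilon$-purity in the spirit of Definition \ref{def3.3}.
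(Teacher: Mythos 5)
Your proposal is correct and follows essentially the same route as the paper's proof: lift the square to a stage using $\lambda$-presentability in $\ck_0$, use enriched $\lambda$-presentability of $A$ and the infimum formula for distances in $\lambda$-directed colimits of $\Met_{\infty}$ together with uncountability of $\lambda$ to find a single stage $j^*$ where the lifted square is genuinely $\varepsilon$-commutative, apply purity there, and push the factorization forward along the (non-expansive) colimit injection. Your closing remark correctly identifies why uncountability is needed and what would be lost without it.
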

\begin{proof}
Let $E:\ce\to\ck^\to$ be a $\lambda$-directed diagram in the morphism category of $\ck$ with $Ee:K_e\to L_e$ $\lambda$-$\varepsilon$-pure for all $e$ 
in $\ce$. For $f=\colim E:K\to L$ in $\ck$ with a colimit cocone $(k_e,l_e):Ee\to f$ we have that $k_e:K_e\to K$ and $l_e:L_e\to L$ are colimits
in $\ck$. Consider an $\eps$-commutative square
$$
\xymatrix@=4pc{
K \ar[r]^{f} & L \\
A\ar [u]^{u} \ar [r]_{g} & B \ar[u]_{v}
}
$$
with $A$ and $B$ $\lambda$-presentable in $\ck$. 

We will show that there are $e_0$ in $\ce$ and $u_{e_0}:A\to K_{e_0}$, $v_{e_0}:B\to L_{e_0}$ in $\ck$, such that $u=k_{e_0}u_{e_0}$, $v=l_{e_0}v_{e_0}$ and $(Ee_0)u_{e_0}\sim_\eps v_{e_0}g$. As $Ee_0$ is $\lambda$-$\eps$-pure, there is then  
$t:B\to K_{e_0}$ in $\ck$ with $tg\sim_\varepsilon u_{e_0}$. Hence, $u=k_{e_0}u_{e_0}\sim_\varepsilon k_{e_0}tg$, and the proof for $f$ to be $\lambda$-$\varepsilon$-pure will be complete. 

Indeed, since $A$ and $B$ are $\lambda$-presentable in $\ck_0$, first one finds $e$ in $\ce$ and $u_{e}:A\to K_{e}$ and $v_{e}:B\to L_{e}$ with $u=k_{e}u_{e}$ 
and $v=l_{e}v_{e}$.
Since $l_{e}(Ee)u_{e}=fk_{e}u_{e}=fu$ and $l_{e}v_{e}g=vg$, 
$$
d(l_{e}(Ee)u_{e},l_{e}v_{e}g)\leq\eps,
$$
follows. Since $A$ is $\lambda$-presentable in $\ck$, $\ck(A,l_e):\ck(A,L_e)\to\ck(A,L)$ is a colimit in $\Met_{\infty}$. 
By the construction of directed colimits in $\tilde{\Met}$, whereby 
$$
d(l_{e}(Ee)u_{e},l_{e}v_{e}g)={\rm inf}_{e'\geq e}d(l_{e,e'}(Ee)u_e,l_{e,e'}v_eg),
$$
with $(k_{e,e'},l_{e,e'}):Ee\to Ee'$ given by the diagram $E$, for all $n=1,2,\dots,$
there are then $e_n\geq e$ in $\ce$ with $d(l_{e,e_n}(Ee)u_{e},l_{e,e_n}v_{e}g)\leq\eps+\frac{1}{n}$. Finally, since $\lambda$ is uncountable, we can find $e_0\geq e_n$ for all $n$
and obtain $u=k_{e_0}u_{e_0}$, $v=l_{e_0}v_{e_0}$ and $(Ee_0)u_{e_0}\sim_\eps v_{e_0}g$.
\end{proof}

\begin{rem}\label{re3.9}
{
\em
As in Proposition \ref{prop3.8} one proves that the classes of weakly and barely $\lambda$-$\eps$-pure morphisms are both closed under $\lambda$-directed colimits in $\ck$, for $\lambda$ uncountable.
}
\end{rem}

\begin{coro}\label{cor3.10} 
Let $\lambda$ be a regular uncountable cardinal and $\ck$ be a $\Met_{\infty}$-enriched category with $\lambda$-directed colimits such that $\ck_0$
is locally $\lambda$-presentable. Then every $\lambda$-pure morphism is $\lambda$-$\varepsilon$-pure, for all $\eps\geq 0$.
\end{coro}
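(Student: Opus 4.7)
The plan is to combine the standard characterization of $\lambda$-pure morphisms in a locally $\lambda$-presentable category (see \cite{AR}, Proposition 2.30) with our Proposition \ref{prop3.8}. Under the hypothesis that $\ck_0$ is locally $\lambda$-presentable, every $\lambda$-pure morphism $f:K\to L$ in $\ck_0$ can be presented as a $\lambda$-directed colimit in the arrow category $\ck_0^\to$ of a diagram of split monomorphisms $(f_i)_{i\in I}$. With this representation in hand, the corollary reduces to a short chain of citations.

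First, every split monomorphism is $\lambda$-$\varepsilon$-pure, by Remark \ref{re3.2}(3), for \emph{any} regular cardinal $\lambda$ and any $\varepsilon\geq 0$; so each $f_i$ is $\lambda$-$\varepsilon$-pure. Second, since $\lambda$ is uncountable, Proposition \ref{prop3.8} ensures that the $\lambda$-directed colimit of $\lambda$-$\varepsilon$-pure morphisms is itself $\lambda$-$\varepsilon$-pure. Hence $f$, being such a colimit, is $\lambda$-$\varepsilon$-pure. The trivial case $\varepsilon=0$ is of course already covered by Remark \ref{re3.2}(5).

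The one subtle point, which I expect to be the main obstacle, is the enriched-versus-ordinary distinction that the paper is careful to maintain throughout. The characterization from \cite{AR} produces a $\lambda$-directed colimit in the ordinary arrow category $\ck_0^\to$, whereas Proposition \ref{prop3.8} is formulated for $\lambda$-directed colimits in the enriched category $\ck$ (so that the representables $\ck(A,-)$ preserve them, as used in that proof). This is precisely where the hypothesis that $\ck$ itself has $\lambda$-directed colimits is needed: it guarantees that the enriched $\lambda$-directed colimit of the diagram $(f_i)$ exists in $\ck^\to$, and since any enriched colimit has the ordinary colimit as its underlying object, this enriched colimit necessarily coincides with the one furnished by the AR characterization. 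Once this identification is made, Proposition \ref{prop3.8} applies directly and delivers the conclusion.
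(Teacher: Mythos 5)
Your proposal is correct and follows essentially the same route as the paper: express the $\lambda$-pure morphism as a $\lambda$-directed colimit of split monomorphisms via \cite{AR} 2.30, note that split monomorphisms are $\lambda$-$\varepsilon$-pure by Remark \ref{re3.2}(3), and conclude by the closure under $\lambda$-directed colimits from Proposition \ref{prop3.8}. Your additional remark reconciling the ordinary colimit from \cite{AR} with the conical colimit in $\ck$ needed for Proposition \ref{prop3.8} is a point the paper leaves implicit, and you handle it correctly.
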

\begin{proof}
Since $\lambda$-pure morphisms are $\lambda$-directed colimits of split morphisms (see \cite{AR} 2.30), the result follows from Remark \ref{re3.2}(3)
and Proposition \ref{prop3.8}.
\end{proof}

We can finally give the following characterization of barely $\lambda$-$\eps$-pure morphisms in a large class of categories.
 
\begin{propo}\label{prop3.11}
Let $\lambda$ be an uncountable regular cardinal and $\ck$ be a $\tilde{\Met}$-enriched category with $\lambda$-directed colimits and $\eps$-pushouts 
such that $\ck_0$ is locally $\lambda$-presentable.  Then the following assertions are equivalent for a morphism $f$ in $\ck$:
\begin{enumerate}
\item[{\rm (i)}]  $f$ is barely $\lambda$-$\varepsilon$-pure;
\item[{\rm (ii)}] there are $g, h$ such that $gf\sim_\varepsilon h$, with $h$ being $\lambda$-pure;
\item[{\rm (iii)}] there are $g, h$ such that $gf\sim_\varepsilon h$, with $h$ being $\lambda$-$\eps$-pure.
\end{enumerate}
\end{propo}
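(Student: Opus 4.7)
The implications (ii)$\Rightarrow$(iii) and (iii)$\Rightarrow$(i) are immediate: Corollary \ref{cor3.10} delivers the first, and Corollary \ref{co3.7}(2) the second. The content of the proposition lies entirely in (i)$\Rightarrow$(ii), so I shall sketch a plan for that direction only.

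Given $f: K \to L$ barely $\lambda$-$\eps$-pure, the plan is to construct $g: L \to M$ and a $\lambda$-pure $h: K \to M$ with $gf \sim_\eps h$ by a transfinite gluing of $\eps$-pushouts. The local input is this: for any commutative square
$$
\xymatrix@=2pc{
K \ar[r]^{f} & L \\
A \ar[u]^{u} \ar[r]_{g'} & B \ar[u]_{v}
}
$$
with $A, B$ $\lambda$-presentable in $\ck$, the barely $\lambda$-$\eps$-pure hypothesis produces $t: B \to K$ with $tg' \sim_\eps u$. Forming the $\eps$-pushout $P$ of $g'$ and $u$ (which exists by hypothesis), the universal property applied to the $\eps$-commutative square $(t, \id_K)$ makes the coprojection $\bar g': K \to P$ a split monomorphism, while the strictly commutative square $(v, f)$ factors through $P$ via a unique $w: P \to L$ with $w\bar g' = f$ and $w\bar u = v$. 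Since $\ck_0$ is locally $\lambda$-presentable, a representative set $\{\sigma\}$ of such squares exists; I plan to glue the corresponding $P_\sigma$'s into an object $M$ by a $\lambda$-directed transfinite process, taking pushouts under both $K$ and $L$, so that $h: K \to M$ emerges as a $\lambda$-directed colimit of split monomorphisms (hence $\lambda$-pure by the characterization in \cite{AR} 2.30 combined with Proposition \ref{prop3.8}), and simultaneously the companion $g: L \to M$ is produced from the universal properties of the combined pushouts.

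The main obstacle is twofold: the canonical $w_\sigma: P_\sigma \to L$ points the wrong way for producing $g$, so additional pushouts under $L$ are required to flip the direction; and one must arrange the transfinite composition so that the $\eps$-slack between $gf$ and $h$ remains bounded by $\eps$ globally, rather than accumulating across successive stages. Both issues should be handled by adapting the standard small-object bookkeeping to the $\eps$-pushout calculus of Section 2, using $\lambda$-presentability of domains and codomains (together with the uncountability of $\lambda$, as exploited in Proposition \ref{prop3.8}) to ensure that every commutative test square over $h$ factors through some intermediate stage of the construction and thus receives a strict filler at the next.
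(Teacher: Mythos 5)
Your handling of (ii)$\Rightarrow$(iii) and (iii)$\Rightarrow$(i) matches the paper, and your local step for (i)$\Rightarrow$(ii) --- forming the $\eps$-pushout of a test square and using bare $\lambda$-$\eps$-purity to split the coprojection out of $K$ --- is exactly the right ingredient. But the global assembly is where the proof actually lives, and the two obstacles you name yourself are genuine and are not resolved by ``standard small-object bookkeeping''. First, a small-object-style gluing of the objects $P_\sigma$ indexed by test squares produces, at every stage, comparison maps $P_\sigma\to L$ (every strictly commutative square factors through its $\eps$-pushout), so everything points \emph{into} $L$; no further pushouts ``under $L$'' manufacture the required $g:L\to M$ without wrecking the splittings that make $h$ pure. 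Second, if you iterate transfinitely, each successor stage contributes its own $\eps$ of slack, and a transfinite composite of stages that are only $\eps$-compatible is in general only $(\alpha\cdot\eps)$-compatible; nothing in the hypotheses bounds this.

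The paper avoids both problems by choosing the index diagram differently: write $f$ itself as a $\lambda$-directed colimit of morphisms $f_i:A_i\to B_i$ between $\lambda$-presentable objects (the canonical diagram, as in \cite{AR} 2.30), and form the $\eps$-pushout of each $f_i$ along the colimit cocone component $u_i:A_i\to K$. All these $\eps$-pushouts share the apex $K$; bare $\lambda$-$\eps$-purity splits each coprojection $\overline{f}_i:K\to\overline{B}_i$, and their $\lambda$-directed colimit $\overline{f}:K\to\overline{B}$ is $\lambda$-pure as a $\lambda$-directed colimit of split monomorphisms --- no transfinite composition occurs, so no slack accumulates. The point you are missing is that this colimit is identified with the \emph{single} $\eps$-pushout of $f$ along $\id_K$ (the $\eps$-mapping cylinder), because the legs $u_i$ converge to $\id_K$ and the $f_i$ to $f$. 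That one $\eps$-pushout hands you $g:L\to\overline{B}$ as a coprojection, with $gf\sim_\eps\overline{f}$ by the $\eps$-commutativity of the pushout square itself; there is no direction to flip, and only a single $\eps$ ever enters.
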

\begin{proof} (i)$\Rightarrow$(ii):
Assume that $f:A\to B$ is barely $\lambda$-$\eps$-pure. We proceed as in the proof of \cite{AR} 2.30(ii) and express $f$ as a $\lambda$-directed colimit 
of morphisms $f_i:A_i\to B_i \;(i\in I)$, with $A_i$ and $B_i$ $\lambda$-presentable.  Since $f$ is barely $\lambda$-$\varepsilon$-pure, for every $i$ there is $t_i:B_i\to A$ such that $t_if_i\sim_\varepsilon u_i$.
Therefore, in the $\eps$-pushouts
$$
\xymatrix@=4pc{
A \ar[r]^{\overline{f}_i} & \overline{B_i} \\
A_i\ar [u]^{u_i} \ar [r]_{f_i} & B_i \ar[u]_{\overline{u}_i}
}
$$
every $\overline{f}_i$ is a split monomorphism. We get a $\lambda$-directed diagram $(\id_A,\overline{b}_{ij}):\overline{f}_i\to\overline{f}_j$.
Its colimit $\overline{f}:A\to\overline{B}$ is $\lambda$-pure (as a $\lambda$-directed colimit of of split monomorphisms), and it may be realized as the the $\varepsilon$-pushout 
$$
\xymatrix@=4pc{
A \ar[r]^{\overline{f}} & \overline{B} \\
A\ar [u]^{\id_A} \ar [r]_{f} & B, \ar[u]_{g}
}
$$ 
so that we have $gf\sim_\varepsilon\overline{f}$. 

(ii)$\Rightarrow$(iii): Corollary \ref{cor3.10}.

(iii)$\Rightarrow$(i): Corollary \ref{co3.7}(2).
\end{proof}
 
\section{$\varepsilon$-injectivity} 
\begin{defi}\label{def4.1}
{
\em
Let $\ck$ be a $\Met_{\infty}$-enriched category. Given a morphism $f:A\to B$, we say that an object $K$ is $\varepsilon$-\textit{injective} to $f$ if, 
for every $g:A\to K$, there exists $h:B\to K$ in $\ck$ with $hf\sim_\varepsilon g$. 
}
\end{defi}

\begin{rem}\label{re4.2}
{
\em
(1) $0$-injectivity coincides with the ordinary injectivity notion.

(2) If $K$ is $\varepsilon$-injective to $f$, then $K$ is also $\eps'$-injective to $f$, for all $\eps'\geq \eps$.

(3) $K$ is $\infty$-injective to $f:A\to B$ precisely when $\ck(B,K)=\emptyset$ only if $\ck(A,K)=\emptyset$.
}
\end{rem} 

For a class $\cf$ of morphisms in $\ck$, we denote by $$\Inj_\eps\cf$$ the class of objects $\eps$-injective to every $f\in\cf$. Trivially, following \ref{re4.2}(2), $\Inj_\eps\cf\subseteq\Inj_{\eps'}\cf$ whenever $\eps\leq\eps'$. A class of objects in $\ck$ is an $\varepsilon$-\textit{injectivity class} if, for some $\cf$, it is of the form $\Inj_\eps\cf$, and if $\cf$ is a set, then $\Inj_\eps\cf$ is called a \textit{small $\varepsilon$-injectivity class}. If the domains and codomains of morphisms in $\cf$ are all $\lambda$-presentable in $\ck$, $\Inj_\eps\cf$ is called a \textit{$\lambda$-$\eps$-injectivity class}. If $\ck_0$ is locally $\lambda$-presentable then every $\lambda$-$\eps$-injectivity class is a small $\eps$-injectivity class.
 
Compatibility of $\sim_\eps$ with the category composition immediately gives the expected closure properties of $\eps$-injectivity classes, as follows.

\begin{lemma}\label{le4.3}
Let $\cl$ be an $\varepsilon$-injectivity class in the $\Met_{\infty}$-enriched category $\ck$ with products. Then $\cl$ is closed under retracts, and $\cl$ is also closed under products in $\ck$.
\end{lemma}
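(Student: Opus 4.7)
The plan is to prove the two closure properties separately, both relying on the key fact (already emphasized after Definition 2.1) that composition in a $\Met_{\infty}$-enriched category is compatible with $\eps$-homotopy, i.e.\ $\sim_\eps$ is preserved by pre- and post-composition.

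For closure under retracts, suppose $K\in\cl$ is $\eps$-injective to every $f:A\to B$ in some generating class $\cf$ with $\cl=\Inj_\eps\cf$, and let $K'$ be a retract of $K$ via $s:K'\to K$ and $r:K\to K'$ with $rs=\id_{K'}$. Given $g:A\to K'$, apply $\eps$-injectivity of $K$ to the morphism $sg:A\to K$ to obtain $h:B\to K$ with $hf\sim_\eps sg$. Post-composing with $r$ yields $rhf\sim_\eps rsg=g$, so $rh:B\to K'$ witnesses $\eps$-injectivity of $K'$ to $f$. Hence $K'\in\cl$.

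For closure under products, let $(K_i)_{i\in I}$ be a family in $\cl$, and let $(\pi_i:K\to K_i)_{i\in I}$ be its product in $\ck$. Given $f:A\to B$ in $\cf$ and $g:A\to K$, set $g_i=\pi_i g$. For each $i\in I$ there exists $h_i:B\to K_i$ with $h_if\sim_\eps g_i$, and the universal property of the product provides a unique $h:B\to K$ with $\pi_i h=h_i$ for all $i$. It remains to verify that $hf\sim_\eps g$. Since $K$ is a conical product in $\ck$, each representable $\ck(A,-):\ck\to\Met_{\infty}$ preserves it, so that $\ck(A,K)$ is the product of the $\ck(A,K_i)$ in $\Met_{\infty}$; this product carries the sup-metric. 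Consequently
\[
d(hf,g)=\sup_{i\in I}d(\pi_i hf,\pi_i g)=\sup_{i\in I}d(h_if,g_i)\leq\eps,
\]
so $hf\sim_\eps g$ as required, proving $K\in\cl$.

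Neither step presents a real obstacle: the retract argument is a direct one-line composition, and the product argument only requires us to recall that conical products in a $\Met_{\infty}$-enriched category give the sup-metric on hom-objects, which turns the pointwise estimates $d(h_if,g_i)\leq\eps$ into the global estimate $d(hf,g)\leq\eps$.
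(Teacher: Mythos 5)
Your proof is correct and follows essentially the same route as the paper: the retract part is the standard composition argument (which the paper simply calls obvious), and the product part rests on the same key fact, namely that the canonical map $\ck(A,\prod_i K_i)\to\prod_i\ck(A,K_i)$ is an isomorphism in $\Met_{\infty}$, so that the sup-metric turns the componentwise estimates $d(h_if,g_i)\leq\eps$ into $d(hf,g)\leq\eps$. No gaps.
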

\begin{proof}
Closure under retracts is obvious. For the product of of a family of $\eps$-injective objects $K_i$, since the canonical
$$
\ck(A,\prod_{i\in I}K_i)\to \prod_{i\in I}\ck(A,K_i),\quad g\mapsto (gp_i)_{i\in I},
$$
(with product projections $p_i$) is an isomorphism, one has
$$
\forall i\in I\;(p_i g\sim_\eps p_ig')\;\Longrightarrow\;g\sim_\eps g'
$$
whenever $g,g': A\to \prod_{i\in I}K_i$ in $\ck$, a property which is immediately seen to guarantee product stability of the $\eps$-injectivity class..
\end{proof}
 
\begin{lemma}\label{le4.4}
Let $\ck$ be a $\Met_{\infty}$-enriched category such that $\ck_0$ is locally presentable. Then every small $\varepsilon$-injectivity class in $\ck$ is closed under $\lambda$-directed colimits, for some regular cardinal $\lambda$. 
\end{lemma}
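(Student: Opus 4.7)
Write $\cl=\Inj_\eps\cf$ for a set $\cf$ of morphisms in $\ck$. Since $\ck_0$ is locally $\mu$-presentable for some regular $\mu$, I would first choose an uncountable regular cardinal $\lambda\geq\mu$ such that every domain and codomain of every $f\in\cf$ is $\lambda$-presentable in $\ck_0$; this is possible because $\cf$ is a set and each object of $\ck_0$ is $\kappa$-presentable for some $\kappa$. The claim will be that $\cl$ is closed under $\lambda$-directed colimits in $\ck$.

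The core of the argument is a routine factorization-and-transport step. Given a $\lambda$-directed colimit $(k_j\colon K_j\to K)_{j\in J}$ in $\ck$ with each $K_j\in\cl$, I fix a morphism $f\colon A\to B$ in $\cf$ and an arbitrary $g\colon A\to K$, and I aim to produce $h\colon B\to K$ with $hf\sim_\eps g$. Since conical colimits in $\ck$ are in particular colimits in $\ck_0$, and $A$ is $\lambda$-presentable in $\ck_0$, the set-valued representable $\ck_0(A,-)$ preserves the colimit, so there exist $j\in J$ and $g_j\colon A\to K_j$ with $g=k_j g_j$. Now $\eps$-injectivity of $K_j$ with respect to $f$ produces $h_j\colon B\to K_j$ with $h_j f\sim_\eps g_j$. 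Setting $h:=k_j h_j$, the preservation of $\sim_\eps$ under composition (from either side) gives
\[
hf=k_j h_j f\sim_\eps k_j g_j=g,
\]
so $K$ is $\eps$-injective to $f$, and hence $K\in\cl$.

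The main point requiring care is the distinction, emphasised earlier in the paper, between $\lambda$-presentability in $\ck$ (enriched) and in $\ck_0$. I would stress that only the $\ck_0$-version is needed here, since all that is required is the set-level factorization of $g\colon A\to K$ through some $k_j$; this is why it suffices to assume local presentability of $\ck_0$ rather than of $\ck$ itself. The remaining ingredients are entirely formal: that $\sim_\eps$ is compatible with composition is a basic property recorded in Section 2, and smallness of $\cf$ is exactly what allows the uniform choice of $\lambda$ at the outset.
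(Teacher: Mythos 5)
Your proposal is correct and matches the paper's argument: the paper likewise chooses $\lambda$ so that the domains of the morphisms in $\cf$ are $\lambda$-presentable and then declares the closure under $\lambda$-directed colimits straightforward, which is exactly the factorization-and-transport step you spell out. (Your additional requirement that the codomains also be $\lambda$-presentable is harmless but unnecessary here.)
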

\begin{proof}
For any given set $\cf$ of morphisms in $\ck$ we can find $\lambda$ such that the domains of morphisms from $\cf$ are all $\lambda$-presentable. The proof that $\Inj_\eps\cf$ is closed
under $\lambda$-directed colimits is then straightforward. 
\end{proof}

We say that a class $\cl$ of objects is \textit{closed under (weakly) $\lambda$-$\eps$-pure morphisms} in $\ck$ if, for every (weakly) $\lambda$-$\eps$-pure morphism $K\to L$, with
$L$ in $\cl$ one has also $K$ in $\cl$.

\begin{lemma}\label{le4.5}
Let $\ck$ be a $\Met_{\infty}$-enriched category having all objects presentable. Then every small $\varepsilon$-injectivity class in $\ck$ is closed under 
$\lambda$-$\varepsilon$-pure morphisms, for some regular cardinal $\lambda$. 
\end{lemma}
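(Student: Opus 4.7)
The plan is to mimic the classical argument that injectivity classes with respect to a set of morphisms whose domains and codomains are $\lambda$-presentable are closed under $\lambda$-pure subobjects, now using the enriched $\eps$-version of that property. Since every object of $\ck$ is presentable and $\cf$ is a set, there is a regular cardinal $\lambda$ such that every domain \emph{and} every codomain of a morphism in $\cf$ is $\lambda$-presentable in $\ck$. I claim this $\lambda$ works.

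Concretely, suppose $f:K\to L$ is $\lambda$-$\eps$-pure with $L\in\Inj_\eps\cf$, and consider an arbitrary $m:A\to B$ in $\cf$ together with any $u:A\to K$. Since $L$ is $\eps$-injective to $m$, the composite $fu:A\to L$ admits a morphism $v:B\to L$ with $vm\sim_\eps fu$. This exhibits the square
$$
\xymatrix@=3pc{
K \ar[r]^{f} & L \\
A \ar[u]^{u} \ar[r]_{m} & B \ar[u]_{v}
}
$$
as $\eps$-commutative, with $A$ and $B$ $\lambda$-presentable by our choice of $\lambda$. Applying the $\lambda$-$\eps$-purity of $f$ produces $t:B\to K$ with $tm\sim_\eps u$, proving that $K$ is $\eps$-injective to $m$. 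Since $m\in\cf$ was arbitrary, $K\in\Inj_\eps\cf$.

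There is essentially no obstacle here: the definitions of $\lambda$-$\eps$-purity and $\eps$-injectivity are designed so that this one-line diagram chase works, and the only non-trivial input is that presentability of all objects together with $\cf$ being a set allows the simultaneous choice of a single $\lambda$ bounding the presentability ranks of both domains and codomains of members of $\cf$.
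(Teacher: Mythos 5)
Your proof is correct and follows essentially the same route as the paper's: choose $\lambda$ bounding the presentability of all domains and codomains in $\cf$, use $\eps$-injectivity of $L$ to build an $\eps$-commutative square over a morphism of $\cf$, and then apply $\lambda$-$\eps$-purity to obtain the required $\eps$-factorization through $K$. No gaps.
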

\begin{proof}
Take $\lambda$ such that the domains and the codomains of morphisms in $\cf$ are all $\lambda$-presentable in $\ck$.
Let $p:K\to L$ be $\lambda$-$\eps$-pure with $L$ in $\Inj_\eps\cf$, and consider $f: A\to B$ in $\cf$ and any $g:A\to K$. 
$\eps$-injectivity of $L$ gives $h:B\to L$ such that $gf\sim_\eps pg$, and then $\lambda$-$\eps$-purity of $p$ gives a morphism $t: B\to K$ with $tf\sim_\eps g$.
\end{proof}

We now have the tools enabling us to state:

\begin{propo}\label{prop4.6}
Let $\ck$ be a $\Met_{\infty}$-enriched category with products, such that all objects in $\ck$ are presentable and the ordinary category $\ck_0$ is locally presentable. Then every small 
$\varepsilon$-injectivity class in $\ck$ is a small injectivity class.
\end{propo}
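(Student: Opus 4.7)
Let $\cl=\Inj_\eps\cf$ with $\cf$ a set of morphisms in $\ck$. The strategy is to verify that $\cl$ is closed under products, $\lambda$-directed colimits, and $\lambda$-pure subobjects for a suitable uncountable regular cardinal $\lambda$, and then invoke the classical characterization of small injectivity classes in locally presentable categories from \cite{AR}.

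By Lemma~\ref{le4.3}, $\cl$ is closed under products. By Lemma~\ref{le4.4} there is a regular cardinal $\lambda_1$ such that $\cl$ is closed under $\lambda_1$-directed colimits, and by Lemma~\ref{le4.5} there is a regular cardinal $\lambda_2$ such that $\cl$ is closed under $\lambda_2$-$\eps$-pure morphisms. Choose an uncountable regular cardinal $\lambda\geq\max(\lambda_1,\lambda_2)$ for which $\ck_0$ is locally $\lambda$-presentable. Remark~\ref{re3.2}(4) guarantees that the class of $\lambda$-$\eps$-pure morphisms sits inside the class of $\lambda_2$-$\eps$-pure morphisms, so all three closure properties persist at this common $\lambda$.

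Next, invoke Corollary~\ref{cor3.10}: every $\lambda$-pure morphism in $\ck$ is $\lambda$-$\eps$-pure. Hence $\cl$ is closed under $\lambda$-pure subobjects in the classical sense of \cite{AR}. The Ad\'amek-Rosick\'y characterization of small injectivity classes (\cite{AR}, Theorem 4.8) then supplies a set $\cg$ of morphisms between $\lambda$-presentable objects of $\ck_0$ with $\cl=\Inj\cg$. Since $0$-injectivity coincides with ordinary injectivity by Remark~\ref{re4.2}(1), this exhibits $\cl=\Inj_0\cg$ as a small injectivity class, which is what we want.

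The step I expect to require the most care is the bridge between the enriched notions underlying Lemmas~\ref{le4.4} and~\ref{le4.5} (closure with respect to colimits and purity in $\ck$) and the unenriched notions (closure in $\ck_0$) demanded by the \cite{AR}-characterization. The two hypotheses \emph{every object of $\ck$ is presentable} and \emph{$\ck_0$ is locally presentable} are precisely what make the two agree for sufficiently large $\lambda$: conical $\lambda$-directed colimits in $\ck$ coincide with $\lambda$-directed colimits in $\ck_0$ on the objects relevant to checking membership in $\cl$, and analogously for purity, legitimizing the invocation of \cite{AR}.
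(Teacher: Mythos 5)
Your proposal is correct and follows essentially the same route as the paper: products via Lemma~\ref{le4.3}, $\lambda$-directed colimits via Lemma~\ref{le4.4}, and $\lambda$-$\eps$-pure (hence, by Corollary~\ref{cor3.10}, $\lambda$-pure) subobjects via Lemma~\ref{le4.5}, followed by the Ad\'amek--Rosick\'y characterization. The only cosmetic difference is that the paper passes explicitly through \cite{AR}~2.36 to convert the latter two closure properties into accessibility and accessible embeddedness before invoking \cite{AR}~4.8, a step your appeal to ``Theorem 4.8'' leaves implicit.
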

\begin{proof}
According to Theorem 4.8 in \cite{AR}, it suffices to show that a small $\eps$-injectivity class is accessible and accessibly embedded into the ambient locally presentable category, as well as closed under products. While the latter condition is satisfied by Lemma \ref{le4.3}, the former two conditions are guaranteed by Lemmas \ref{le4.4} and \ref{le4.5}, in conjunction with Corollary 2.36 in \cite{AR}.
\end{proof}

\begin{rem}\label{re4.7}
{
\em
By Theorem 2.2 in \cite{RAB}, in a locally $\lambda$-presentable category, $\lambda$-injectivity (= $\lambda$-$0$-injectivity) classes are characterized by closure under products, $\lambda$-directed colimits and $\lambda$-pure subobjects. Consequently, every $\lambda$-$\varepsilon$-injectivity class in a category satisfying the hypotheses of Proposition \ref{prop4.6} is a $\lambda$-injectivity class. 
}
\end{rem}

\begin{theo}\label{th4.8}
Let $\lambda$ be an uncountable regular cardinal and $\ck$ a $\Met_{\infty}$-enriched category with $\lambda$-directed colimits, such that $\ck_0$ is
locally $\lambda$-presentable and any $\lambda$-presentable object in $\ck_0$ is $\lambda$-presentable in $\ck$. Then every class 
$\cl$ of objects in $\ck$ closed under products, $\lambda$-directed colimits and weakly $\lambda$-$\eps$-pure morphisms is a $\lambda$-$\eps$-injectivity class and, in particular, a small injectivity class.
\end{theo}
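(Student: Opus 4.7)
The plan is to define
\[
\cf = \{f\colon A \to B \mid A, B \text{ are $\lambda$-presentable in } \ck, \text{ every } L \in \cl \text{ is $\eps$-injective to } f\},
\]
which, since $\ck_0$ is locally $\lambda$-presentable, is (up to isomorphism) a set, and to prove $\cl = \Inj_\eps\cf$. The inclusion $\cl \subseteq \Inj_\eps\cf$ is immediate from the definition of $\cf$, so once the reverse inclusion is established, $\cl$ is indeed a $\lambda$-$\eps$-injectivity class.

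For the \emph{in particular} clause I observe that by Corollary \ref{cor3.10} every $\lambda$-pure morphism is $\lambda$-$\eps$-pure, and by Lemma \ref{le3.4}(1) every $\lambda$-$\eps$-pure morphism is weakly $\lambda$-$\eps$-pure. Hence the hypothesized closure of $\cl$ under weakly $\lambda$-$\eps$-pure morphisms forces closure under $\lambda$-pure subobjects in $\ck_0$. Combined with closure under products and $\lambda$-directed colimits, the classical Theorem 2.2 of \cite{RAB} (cf.\ Remark \ref{re4.7}) yields $\cl = \Inj\cf_0$ for some set $\cf_0$ of morphisms between $\lambda$-presentable objects---the asserted small injectivity class---and makes $\cl$ reflective in $\ck_0$, with reflector $R$ and unit $\eta$.

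For the converse $\Inj_\eps\cf \subseteq \cl$, fix $K \in \Inj_\eps\cf$; it suffices to produce $L \in \cl$ together with a weakly $\lambda$-$\eps$-pure morphism $p\colon K \to L$, the closure hypothesis then giving $K \in \cl$. The natural choice is $L := RK \in \cl$ and $p := \eta_K$. The crux is the verification that $\eta_K$ is weakly $\lambda$-$\eps$-pure: given an $\eps$-commutative square with $A, B$ $\lambda$-presentable, $u\colon A \to K$, $v\colon B \to RK$, $f\colon A \to B$ and $\eta_K u \sim_\eps v f$, I must construct $t\colon B \to K$ with $tf \sim_{2\eps} u$. Since any $f_0 \in \cf_0$ has every $L \in \cl$ as a $0$-injective object and hence also as an $\eps$-injective one, $\cf_0 \subseteq \cf$; moreover $\eta_K$ is built by a transfinite composite of pushouts along $\cf_0$. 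Because $B$ is $\lambda$-presentable in $\ck$, the morphism $v$ factors through some stage $K_\beta$ of that construction, and an analysis as in the proof of Proposition \ref{prop3.8}---using the uncountability of $\lambda$---allows one to take $\beta$ large enough that the $\eps$-commutativity already holds in $\ck(A, K_\beta)$. Iterating the $\eps$-injectivity of $K$ with respect to the $\cf_0$-morphisms used at each successor stage, and assembling the resulting stage-wise $\eps$-approximate factorizations through $K$ via the transitivity rule for $\eps$-homotopy, produces the required $t\colon B \to K$ with $tf \sim_{2\eps} u$; the factor of $2$ absorbs both the original $\eps$-commutativity and the additional $\eps$-slack introduced when converting each strict pushout step into an $\eps$-approximate factorization through $K$.

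The main obstacle is the last step: controlling the cumulative $\eps$-error through the transfinite reflection construction while coordinating the successive $\eps$-approximations along the pushout chain. The assumption that every $\lambda$-presentable object in $\ck_0$ remains $\lambda$-presentable in $\ck$ is essential, because it is what permits the hom-metric computations at $\lambda$-directed colimits (in the spirit of Proposition \ref{prop3.8}) to identify a single stage at which the required $\eps$-relations already hold.
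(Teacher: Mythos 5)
Your overall skeleton matches the paper's: define $\cf$ as the morphisms between $\lambda$-presentable objects to which every member of $\cl$ is $\eps$-injective, note $\cl\subseteq\Inj_\eps\cf$ trivially, and reduce the converse to showing that a (weak) reflection $r\colon K\to K^\ast$ of $K\in\Inj_\eps\cf$ into $\cl$ is weakly $\lambda$-$\eps$-pure. (Minor point: \cite{RAB} 2.2 and \cite{AR} 4.8 give only \emph{weak} reflectivity, not a reflector $R$ with unit $\eta$; this is harmless for your strategy, since any morphism into an object of $\cl$ that is weakly $\lambda$-$\eps$-pure would do.)

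The genuine gap is in the crux, exactly where you flag ``the main obstacle.'' Your plan is to run up the transfinite pushout construction of the weak reflection, using $\eps$-injectivity of $K$ to the $\cf_0$-morphisms at each successor stage and assembling the stage-wise approximate factorizations by the transitivity rule. This cannot work as described, for two reasons. First, the stages are \emph{strict} pushouts: to induce a morphism $K_{\beta+1}\to K$ from the pushout you need a strictly commuting cocone, but $\eps$-injectivity of $K$ only hands you squares commuting up to $\eps$, which do not factor through a strict pushout (the theorem's hypotheses do not even provide $\eps$-pushouts; cf.\ Remark \ref{re4.9}, which treats that as an additional assumption). Second, even granting some way to pass each stage, the transitivity rule $f\sim_\eps g,\ g\sim_\delta h\Rightarrow f\sim_{\eps+\delta}h$ makes the error \emph{additive} over stages: after $n$ successor steps you have slack $n\eps$, and the construction has transfinitely many steps, so no uniform bound of $2\eps$ (or any finite multiple of $\eps$) emerges. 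The paper avoids this entirely by a different device: a minimality-of-presentability-rank argument (the ``Claim'' in its proof) reduces the given $\eps$-commutative square to one over a weak reflection $\bar r\colon\bar K\to\bar K^\ast$ with $\bar K$ $\lambda$-presentable; then $\bar K^\ast$ is written as a $\lambda$-directed colimit of $\lambda$-presentable objects $Q_t$, and $\bar r$ is replaced by a morphism $\tilde r\colon\bar K\to Q_{t_1}$ lying in $\cf$. The $\eps$-injectivity of $K$ is then invoked \emph{exactly once}, against $\tilde r$, so the total slack is one $\eps$ from the original square plus one $\eps$ from that single application, yielding the $2\eps$ bound. You would need to replace your iteration by an argument of this kind.
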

\begin{proof}
Let $\cl$ be closed under products, $\lambda$-directed colimits and weakly $\lambda$-$\eps$-pure morphisms. We will follow the proof of \cite{RAB} 2.2.
According to \ref{cor3.10} and \cite{AR} 2.36 and 4.8, $\cl$ is weakly reflective. This means that every $K$ in $\ck$ comes with a morphism
$r_K:K\to K^\ast$, $K^\ast\in\cl$, such that every object of $\cl$ is injective to $r_K$. Let $\cf$ consist of all morphisms $f:A\to B$ such that $A$ 
and $B$ are $\lambda$-presentable and every object of $\cl$ is $\eps$-injective to $f$. By the definition of $\cf$ we have $\cl\subseteq\Inj_\eps\cf$, and the converse inclusion
$\Inj_\eps\cf\subseteq\cl$ will follow from the closure of $\cl$ under weakly $\lambda$-$\eps$-pure morphisms once we have shown
that, for $K\in\Inj_\eps\cf$, any weak reflection of $K$ into $\cl$ is weakly $\lambda$-$\eps$-pure.
 
Thus, given $K\in\Inj_\eps\cf$ and a weak reflection $r:K\to K^*$ in $\cl$, we are to prove that in any $\eps$-commutative square
\vskip 2 mm
$$
\xymatrix@C=3pc@R=3pc{
A \ar[r]^{h} \ar [d]_u &
B \ar [d]^{v}\\
K\ar[r]_{r} & K^*
}
$$
\vskip 2 mm\noindent
with $A$ and $B$ $\lambda$-presentable the morphism $u$ $2\eps$-factors through $h$. We will say that $(u,v) :h\to r$ is an $\eps$-morphism in this situation.

\vskip 2mm
\noindent {\it Claim}: There is a factorization $u=u_2\cdot u_1$ and an $\eps$-morphism $(u_1,v_1):h\to \bar r$ where $\bar r:\bar K\to\bar K^*$ 
is a weak reflection into $\cl$ of a $\lambda$-presentable $\bar K$.

\vskip2mm
\noindent {\it Proof of claim}. Consider all $\eps$-morphisms $(u_1,v_1):h\to\bar r$ where $\bar r:\bar K\to \bar K^*$ is a weak reflection of
$\bar K$ in $\cl$ and $u=u_2\cdot u_1$ for some $u_2$. Since $(u,v):h\to r$ is such an $\eps$-morphism, we can take the smallest $\alpha$ such that $\bar K$ is $\alpha$-presentable. We are to prove $\alpha\leq\lambda$. Assuming $\alpha >\lambda$ we will obtain a contradiction. As in \cite{RAB}, we express
$\bar K$ as a colimit of a smooth chain $k_{ij}:K_i\to K_j$ $(i\leq j<\alpha)$ of objects $K_i$ of presentability less that $\alpha$. This provides
weak reflections $r_i:K_i\to K_i^\ast$ into $\cl$ such that their colimit $r_\alpha:\bar K\to K_\alpha^\ast$ factorizes through $\bar r$, i.e.,
$r_\alpha=s\bar r$ for some $s:\bar K^\ast\to K_\alpha^\ast$. Since $\bar ru_1\sim_\eps v_1h$, we have $r_\alpha u_1=s\bar ru_1\sim_\eps sv_1h$, so that $(u_1,sv_1):h\to r_\alpha$ is an $\eps$-morphism. In the same way as in the proof of \ref{prop3.8}, this $\eps$-morphism $\eps$-factors
through some $r_i$, $i<\alpha$. This means that there is an $\eps$-morphism $h\to r_i$, which contradicts the minimality of $\alpha$ and proves the claim.

We are ready to prove that $u$ $2\eps$-factors through $h$. Let us consider a factorization $u=u_2\cdot u_1$ and a morphism $(u_1,v_1):h\to \bar r$ 
as in the above claim. Let us express $\bar K^*$ as a $\lambda$-directed colimit of $\lambda$-presentable objects $Q_t$, $t\in T$, with  a
colimit cocone $q_t:Q_t\to\bar K^*$. Since both $\bar K$ and $B$ are $\lambda$-presentable, the morphisms $\bar r$ and $v_1$ both factor through $q_{t_0}$ for some $t_0\in T$. Since $A$ is $\lambda$-presentable, there then exists $t_1\geq t_0$ in $T$ with an $\eps$-commutative diagram, as follows:
\vskip 2 mm
$$
\xymatrix@C=3pc@R=3pc{
A \ar [dd]_{u_1}
  \ar [rr]^{h} &&
B \ar[dd]^{v_1}
  \ar [dl]_{\tilde v_1}  \\
& Q_{t_1}\ar[dr]^{q_{t_1}}  &\\
\bar K \ar[ur]^{\tilde r}
  \ar[rr]_{\bar r}  && \bar K^*
}
$$
\vskip 2 mm\noindent
Since all objects of $\cl$ are injective to $\bar r$, they are also injective to $\tilde r$; moreover, $\bar K$ and $Q_{t_1}$ are
both $\lambda$-presentable. Thus $\tilde r\in\cf$. This implies that $K$ is $\eps$-injective to $\tilde r$. Choosing $d:Q_{t_1}\to K$ with 
$u_2\sim_\eps d\tilde r$ we obtain
$$
u=u_2u_1\sim_\eps d\tilde ru_1\sim_\eps d\tilde v_1h.
$$
Hence, $r$ is weakly $\lambda$-$\eps$-pure, and thus $K$ lies in $\cl$.
\end{proof}

\begin{rem}\label{re4.9}
{
\em
(1) Let 
$$
\xymatrix@=4pc{
B \ar[r]^{\overline{g}} & D \\
A\ar [u]^{f} \ar [r]_{g} & C \ar[u]_{\overline{f}}
}
$$
be an $\eps$-pushout and $K$ be $\eps$-injective to $f$. Then $K$ is injective to $\overline{f}$.
Indeed, considering $u:C\to K$ we obtain $v:B\to K$ such that $vf\sim_\eps u$. Thus there is $w:D\to K$ such that $w\overline{f}=u$.

(2) Let
$$
\xymatrix@=4pc{
A \ar[r]^{\overline{f}} & \overline{B} \\
A\ar [u]^{\id_A} \ar [r]_{f} & B \ar[u]_{p}
}
$$ 
be an $\eps$-pushout as in the proof of \ref{prop3.11} (which corresponds to the mapping cylinder in homotopy theory). Then an object $K$ is $\eps$-injective to $f$ if and only if it injective to $\overline{f}$. Indeed,
the ``if"-part of this statement follows from (1), and the converse is evident.

(3) If $\lambda$ is an uncountable regular cardinal and $A,B$ are $\lambda$-presentable, then $\overline{B}$ in (2) is $\lambda$-presentable.
The verification is analogous to that in the proof of \ref{prop3.8}.

This yields, under the presence of $\eps$-pushouts, a direct proof of \ref{th4.8}.
}
\end{rem} 

\begin{pb}\label{pb4.10}
{
\em
Let $\lambda$ be an uncountable regular cardinal and $\ck$ a $\Met_{\infty}$-enriched category with $\lambda$-directed colimits, such that $\ck_0$ is
locally $\lambda$-presentable and any $\lambda$-presentable object in $\ck_0$ is $\lambda$-presentable in $\ck$. Are $\lambda$-$\eps$-injectivity classes 
in $\ck$ precisely classes closed under products, $\lambda$-directed colimits and $\lambda$-$\eps$-pure morphisms?
}
\end{pb}

\section{Approximate injectivity}
The following definition is motivated by \cite{L} 3.2.
\begin{defi}\label{def5.1}
{
\em
Let $\ck$ be a $\Met_{\infty}$-enriched category.
We say that an object $K$ is \textit{approximately injective} to $f:A\to B$ in $\ck$ if it is $\varepsilon$-injective
to $f$ for every $\varepsilon>0$.
}
\end{defi}
The class of objects in $\ck$ approximately injective to a class $\cf$ of morphisms in $\ck$ will be denoted $\Inj_{\ap}\cf$. If $\cf$ is a set, then $\Inj_{\ap}\cf$ is called
an \textit{approximate small injectivity class}. If the domains and the codomains of all morphisms in $\cf$ are $\lambda$-presentable in $\ck$,
$\Inj_{\ap}\cf$ is called an \textit{approximate $\lambda$-injectivity class}. If $\ck_0$ is locally presentable, then any approximate $\lambda$-injectivity class is an approximate small injectivity class.
 
\begin{defi}\label{def5.2}
{
\em 
A morphism in $\ck$ is \textit{(weakly, barely)} $\lambda$-$\ap$-\textit{pure} if it is (weakly, barely) $\lambda$-$\eps$-pure for every $\eps>0$.
}
\end{defi}

\begin{rem}\label{re5.3}
{
\em
(1) A composite of $\lambda$-$\ap$-pure morphisms is $\lambda$-$\ap$-pure.

(2) If the composite morphism $f_2f_1$ is $\lambda$-$\ap$-pure, then $f_1$ is also $\lambda$-$\ap$-pure.

(3) Let $\lambda$ be an uncountable regular cardinal and $\ck$ be a $\Met_{\infty}$-enriched category with $\lambda$-directed colimits and $\eps$-pushouts, 
such that $\ck_0$ is locally $\lambda$-presentable. Then every barely $\lambda$-$\ap$-pure morphism is a monomorphism. Indeed,
considering the $\eps$-pushouts
$$
\xymatrix@=4pc{
A \ar[r]^{f_\eps} & B_\eps \\
A\ar [u]^{\id_A} \ar [r]_{f} & B, \ar[u]_{g}
}
$$ 
and applying the characterization \ref{prop3.11}(ii), since $\lambda$-pure morphisms in an accessible category are monomorphisms, we see that every $f_\eps$ is monic. Consequently, as a directed colimit of these morphisms, also $f$ is a monomorphism.

(4) It follows easily from Lemma \ref{le3.4}(2) that every weakly $\lambda$-ap-pure morphism  is barely $\lambda$-ap-pure and, hence, a monomorphism, by (3). Consequently, rather than referring to its closure under weakly $\lambda$-$\ap$-pure morphisms 
we may say that a class $\cl$ of objects be \textit{closed under weakly $\lambda$-$\ap$-pure subobjects}.
}
\end{rem}

\begin{lemma}\label{le5.4}
Let $\lambda$ be an uncountable regular cardinal and $\ck$ a $\tilde{\Met}$-enriched category with $\lambda$-directed colimits, such that $\ck_0$
is locally $\lambda$-presentable. Then every $\lambda$-pure morphism is 
$\lambda$-$\ap$-pure.
\end{lemma}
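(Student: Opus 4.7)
The plan is essentially to observe that this is an immediate corollary of the material already developed in Section 3. By Definition \ref{def5.2}, a morphism is $\lambda$-$\ap$-pure exactly when it is $\lambda$-$\eps$-pure for every $\eps>0$. So the statement to prove reduces to: every $\lambda$-pure morphism is $\lambda$-$\eps$-pure for all $\eps>0$. This is subsumed by Corollary \ref{cor3.10} (which in fact allows $\eps\geq 0$), so a single-line proof suffices: \emph{apply Corollary \ref{cor3.10}}.

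If one wanted to unpack this rather than citing \ref{cor3.10} directly, the argument would proceed in three steps, all of which are already in place in the paper. First, by Theorem 2.30 of \cite{AR}, in the locally $\lambda$-presentable ordinary category $\ck_0$ every $\lambda$-pure morphism can be written as a $\lambda$-directed colimit (in the arrow category) of split monomorphisms. Second, by Remark \ref{re3.2}(3), every split monomorphism is $\lambda$-$\eps$-pure for any $\lambda$ and any $\eps\geq 0$, in particular for each $\eps>0$. Third, by Proposition \ref{prop3.8}, the class of $\lambda$-$\eps$-pure morphisms is closed under $\lambda$-directed colimits provided $\lambda$ is uncountable. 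Chaining these three facts transfers $\lambda$-$\eps$-purity from the split-mono stages to the given $\lambda$-pure morphism, for each positive $\eps$.

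There is no real obstacle here, since the heavy lifting was done in Proposition \ref{prop3.8}. The only point worth flagging is the essential use of uncountability of $\lambda$: it is precisely what allows one, in the proof of \ref{prop3.8}, to dominate the countable sequence of indices $e_n\geq e$ (chosen so that $d(l_{e,e_n}(Ee)u_e,l_{e,e_n}v_eg)\leq\eps+\tfrac{1}{n}$) by a single $e_0\geq e_n$ for all $n$, thereby passing from the approximate factorizations at each $e_n$ to an exact $\eps$-estimate at $e_0$. This hypothesis propagates through \ref{cor3.10} and is the reason \ref{le5.4} requires $\lambda$ to be uncountable.
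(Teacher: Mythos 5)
Your proof is correct and coincides with the paper's: the paper likewise disposes of Lemma \ref{le5.4} by citing Corollary \ref{cor3.10}, and your unpacking (split monomorphisms via \cite{AR} 2.30, Remark \ref{re3.2}(3), and closure under $\lambda$-directed colimits from Proposition \ref{prop3.8}) is exactly how that corollary is proved in the paper.
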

\begin{proof}
The statement follows from Corollary \ref{cor3.10}.
\end{proof}

\begin{theo}\label{th5.5}
Let $\lambda$ be an uncountable regular cardinal and $\ck$ a $\Met_{\infty}$-enriched category with $\lambda$-directed colimits, such that $\ck_0$ is locally $\lambda$-presentable and every $\lambda$-presentable object in $\ck_0$ is $\lambda$-presentable in $\ck$. Then the approximate $\lambda$-injectivity classes in $\ck$ are precisely the full subcategories closed under products, $\lambda$-directed colimits and weakly $\lambda$-$\ap$-pure morphisms.
\end{theo}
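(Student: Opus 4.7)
My plan is to mimic the proof of Theorem~\ref{th4.8}, adapting the use of weak $\lambda$-$\eps$-purity to the approximate setting by exploiting the quantification over all $\eps>0$. For the easy direction, let $\cl=\Inj_{\ap}\cf$ with all domains and codomains in $\cf$ being $\lambda$-presentable; then $\cl=\bigcap_{\eps>0}\Inj_\eps\cf$. Closure of each $\Inj_\eps\cf$ under products and $\lambda$-directed colimits follows from Lemmas~\ref{le4.3} and~\ref{le4.4}, and is inherited by $\cl$. To see closure under weakly $\lambda$-$\ap$-pure morphisms, let $p:K\to L$ with $L\in\cl$ and fix $\eps>0$. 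For any $f:A\to B$ in $\cf$ and $g:A\to K$, the $(\eps/2)$-injectivity of $L$ produces $h:B\to L$ with $hf\sim_{\eps/2}pg$, and weak $\lambda$-$(\eps/2)$-purity of $p$ yields $t:B\to K$ with $tf\sim_\eps g$; since $\eps$ was arbitrary, $K$ is approximately injective to $f$.

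For the substantive direction, suppose $\cl$ is closed under products, $\lambda$-directed colimits and weakly $\lambda$-$\ap$-pure morphisms. I first upgrade this to weak reflectivity: by Corollary~\ref{cor3.10} every $\lambda$-pure morphism is $\lambda$-$\eps$-pure for every $\eps>0$, hence weakly $\lambda$-$\eps$-pure by Lemma~\ref{le3.4}(1), so $\cl$ is closed under $\lambda$-pure subobjects. Then Theorem~2.2 of \cite{RAB} identifies $\cl$ as a $\lambda$-injectivity class in $\ck_0$, and by \cite{AR}~4.8 it is weakly reflective. Let $\cf$ consist of all morphisms $f:A\to B$ between $\lambda$-presentable objects such that every object of $\cl$ is approximately injective to $f$. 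The inclusion $\cl\subseteq\Inj_{\ap}\cf$ is immediate from the definition of $\cf$.

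To establish $\Inj_{\ap}\cf\subseteq\cl$, take $K\in\Inj_{\ap}\cf$, pick a weak reflection $r:K\to K^\ast$ into $\cl$, and show that $r$ is weakly $\lambda$-$\ap$-pure; closure under such morphisms will then force $K\in\cl$. Fix $\eps>0$ and an $\eps$-commutative square from some $h:A\to B$ between $\lambda$-presentables to $r$. Following verbatim the Claim in the proof of Theorem~\ref{th4.8}, I factor $u=u_2u_1$ with $(u_1,v_1):h\to\bar r$ an $\eps$-morphism to a weak reflection $\bar r:\bar K\to\bar K^\ast$ with $\bar K$ $\lambda$-presentable. Expressing $\bar K^\ast$ as a $\lambda$-directed colimit of $\lambda$-presentable objects $Q_t$ and factoring $\bar r$ and $v_1$ through a common $Q_{t_1}$ yields $\tilde r:\bar K\to Q_{t_1}$ and $\tilde v_1:B\to Q_{t_1}$ with $\tilde r u_1\sim_\eps\tilde v_1 h$, while every object of $\cl$ remains (ordinarily) injective to $\tilde r$. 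In particular $\cl$ is approximately injective to $\tilde r$, whence $\tilde r\in\cf$ and $K$ is $\eps$-injective to $\tilde r$; choosing $d:Q_{t_1}\to K$ with $u_2\sim_\eps d\tilde r$ gives $u=u_2u_1\sim_\eps d\tilde r u_1\sim_\eps d\tilde v_1 h$, so $u\sim_{2\eps}d\tilde v_1 h$, and $r$ is weakly $\lambda$-$\eps$-pure. Since $\eps>0$ was arbitrary, $r$ is weakly $\lambda$-$\ap$-pure.

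The main obstacle I anticipate is not a new technical difficulty but the careful bookkeeping of the factor of $2$ built into the definition of "weakly $\lambda$-$\eps$-pure" against the universal quantifier over $\eps>0$ built into $\lambda$-$\ap$-purity: the Theorem~\ref{th4.8} machinery naturally converts $\eps$-data into $2\eps$-approximations, and this is harmless only because $\eps$ ranges over all positive reals, so "weakly $\lambda$-$\eps$-pure for every $\eps>0$" and "weakly $\lambda$-$2\eps$-pure for every $\eps>0$" coincide. A secondary delicate point is verifying that the auxiliary morphism $\tilde r$ extracted via the colimit factorization actually belongs to $\cf$; this uses the observation, already implicit in the proof of Theorem~\ref{th4.8}, that $\cl$ is ordinarily (not merely approximately) injective to $\tilde r$, which comfortably suffices.
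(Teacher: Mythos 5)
Your proposal is correct and follows essentially the same route as the paper: the easy direction via $\Inj_{\ap}\cf=\bigcap_{\eps>0}\Inj_\eps\cf$ with the $\eps/2$ bookkeeping for weak purity, and the converse by rerunning the Theorem~\ref{th4.8} argument for each $\eps>0$ to show the weak reflection is weakly $\lambda$-$\ap$-pure. The two points you flag as delicate (the factor of $2$ being absorbed by the quantifier over $\eps$, and $\tilde r\in\cf$ via ordinary injectivity of $\cl$ to $\tilde r$) are exactly the ones the paper relies on.
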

\begin{proof}

Since
$$
\Inj_{\ap}\cf=\bigcap\limits_{\eps>0}\Inj_\eps\cf,
$$
every approximate $\lambda$-injectivity class is closed under products and $\lambda$-directed colimits (see \ref{le4.3} and \ref{le4.4}).
We will show that $\Inj_{\ap}\cf$ is closed under weakly $\lambda$-$\ap$-pure subobjects. Let $p: K\to L$ be $\lambda$-$\ap$-pure and $L$
belong to $\Inj_{\ap}\cf$. Consider $f:A\to B$ in $\cf$, $\eps>0$ and $u:A\to K$. There is $v:B\to L$ such that $pu\sim_{\frac{\eps}{2}}vf$. Since
$p$ is weakly $\lambda$-$\frac{\eps}{2}$-pure, there exists $t:B\to K$ with $tf\sim_\eps u$. Thus $K$ is $\eps$-injective to $f$.
  
Let $\cl$ be closed under products, $\lambda$-directed colimits and weakly $\lambda$-$\ap$-pure subobjects. We will proceed in the same way as in \ref{th4.8}. 
Let $\cf$ consist of all morphisms $f:A\to B$ such that $A$ and $B$ are $\lambda$-presentable and every object of $\cl$ is approximately injective to $f$. We have $\cl\subseteq\Inj_{\ap}\cf$, and the converse inclusion will follow from the fact that every weak reflection of $K\in\Inj_{\ap}\cf$ into $\cl$ is weakly $\lambda$-$\ap$-pure. Since $K\in\Inj_{\frac{\eps}{2}}\cf$, a weak reflection $r:K\to K^\ast$ is weakly 
$\lambda$-$\eps$-pure. Hence $r$ is $\lambda$-$\ap$-pure.
\end{proof}

In the presence of $\eps$-pushouts, we can speak about the closure under weakly $\lambda$-$\ap$-pure subobjects (see \ref{re5.3}(4)).

\begin{coro}\label{cor5.6}
Under the hypotheses of Theorem \ref{th5.5},  every approximate $\lambda$-injectivity class in $\ck$ is a $\lambda$-injectivity class.
\end{coro}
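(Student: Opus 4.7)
The plan is to reduce Corollary \ref{cor5.6} to the known characterization of $\lambda$-injectivity classes in ordinary locally $\lambda$-presentable categories, namely the result recalled in Remark \ref{re4.7} from \cite{RAB} 2.2: a class in a locally $\lambda$-presentable category is a $\lambda$-injectivity class precisely when it is closed under products, $\lambda$-directed colimits, and $\lambda$-pure subobjects. Since the ambient category $\ck_0$ is locally $\lambda$-presentable by hypothesis, it suffices to check these three closure conditions for an approximate $\lambda$-injectivity class $\cl\subseteq\ck$.

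The first two closure properties are immediate from Theorem \ref{th5.5}: any approximate $\lambda$-injectivity class is closed under products and $\lambda$-directed colimits. So the only point that needs argument is closure under $\lambda$-pure subobjects (in the ordinary, non-enriched sense). For this I would string together the purity lemmas already available. Let $p:K\to L$ be a $\lambda$-pure morphism with $L\in\cl$. By Lemma \ref{le5.4}, $p$ is $\lambda$-$\ap$-pure, i.e., $\lambda$-$\eps$-pure for every $\eps>0$. By Lemma \ref{le3.4}(1), each $\lambda$-$\eps$-pure morphism is in particular weakly $\lambda$-$\eps$-pure, so $p$ is weakly $\lambda$-$\ap$-pure. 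Theorem \ref{th5.5} then gives $K\in\cl$, as required.

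Having verified all three closure properties, Remark \ref{re4.7} (= \cite{RAB} Theorem 2.2) applies and exhibits $\cl$ as a $\lambda$-injectivity class in $\ck_0$. There is no real obstacle here: the corollary is essentially a bookkeeping consequence of the bridge Lemma \ref{le5.4} between ordinary $\lambda$-purity and enriched $\lambda$-$\ap$-purity, combined with the trivial fact (Lemma \ref{le3.4}(1)) that strict purity implies its weak variant. The whole argument can be written in a few lines.
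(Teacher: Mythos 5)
Your proof is correct and follows exactly the paper's intended route: the paper's own (one-line) proof cites precisely Lemma \ref{le5.4}, Theorem \ref{th5.5} and \cite{RAB} 2.2, and your argument fills in the same chain, including the small but necessary step via Lemma \ref{le3.4}(1) to pass from $\lambda$-$\ap$-pure to weakly $\lambda$-$\ap$-pure so that Theorem \ref{th5.5} applies.
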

\begin{proof}
The statement follows from \ref{le5.4}, \ref{th5.5} and \cite{RAB} 2.2.
\end{proof}

\begin{rem}\label{re5.7}
{
\em
Continuing to work under the hypotheses of Theorem \ref{th5.5}, we let $\cl$ be an approximate 
$\lambda$-injectivity class in $\ck$. Then, following \ref{cor5.6} and \cite{AR} 4.8, $\cl$ is weakly reflective. We claim  that every object $K$ that is approximately injective to its weak reflection $r:K\to K^\ast$ must lie in $\cl$. Indeed, $r$ $\eps$-splits for every $\eps>0$ and then, by \ref{le3.4}, must be weakly $\lambda$-$\ap$-pure, for any 
$\lambda$. Hence, $K\in\cl$ follows.
}
\end{rem}

Recall that {\em Vop\v  enka's Principle} is a large-cardinal principle which guarantees that injectivity classes in a locally presentable category are characterized by their closure under products and split subobjects (see \cite{AR} 6.26). We are now ready to conclude the validity of an ``ap-version" of this theorem. To state it, we say that $f$ is $\ap$-\textit{split} if it is $\eps$-split for every $\eps>0$.

\begin{theo}\label{th5.8}
Under Vop\v  enka's Principle,
the following conditions are equivalent for a full subcategory $\cl$ of a category $\ck$ satisfying the hypotheses of Theorem \ref{th5.5}:
\begin{enumerate}
\item $\cl$ is closed under products and $\ap$-split subobjects,
\item $\cl$ is an $\ap$-injectivity class,
\item $\cl$ is weakly reflective and closed under $ap$-split subobjects.
\end{enumerate}
\end{theo}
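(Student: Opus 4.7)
The plan is to prove the cycle (2)$\Rightarrow$(1)$\Rightarrow$(3)$\Rightarrow$(2); only the middle step will require Vop\v enka's Principle, while the other two implications use the machinery already developed in the paper.

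For (2)$\Rightarrow$(1), I would use $\Inj_{\ap}\cf=\bigcap_{\eps>0}\Inj_\eps\cf$ to deduce closure under products from Lemma \ref{le4.3} applied at every $\eps>0$. For closure under ap-split subobjects, suppose $i:K\to L$ is ap-split with $L\in\cl=\Inj_{\ap}\cf$, and take $f:A\to B$ in $\cf$ together with $g:A\to K$. Fix $\eps>0$. By $(\eps/2)$-injectivity of $L$ there is $h:B\to L$ with $hf\sim_{\eps/2}ig$, and by $(\eps/2)$-splitting of $i$ there is $p:L\to K$ with $pi\sim_{\eps/2}\id_K$. The transitivity rule for $\eps$-homotopy then gives
\[
(ph)f=p(hf)\sim_{\eps/2}p(ig)=(pi)g\sim_{\eps/2}g,
\]
so $(ph)f\sim_\eps g$; hence $K\in\Inj_\eps\cf$ for every $\eps>0$, and therefore $K\in\cl$.

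For (1)$\Rightarrow$(3), observe that every split monomorphism is in particular ap-split, so $\cl$ is closed in $\ck_0$ under products and retracts. Under Vop\v enka's Principle, such a full subcategory of a locally presentable category is a small-injectivity class (\cite{AR} 6.27), and every small-injectivity class in a locally presentable category is weakly reflective by \cite{AR} 4.8. Combined with the ap-split closure already assumed in (1), this yields (3).

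For (3)$\Rightarrow$(2), I would pick, for each $K\in\ck$, a weak reflection $r_K:K\to K^*$ into $\cl$, and set $\cf=\{r_K\mid K\in\ck\}$. The claim is $\cl=\Inj_{\ap}\cf$. Any $L\in\cl$ is in fact $0$-injective to each $r_K$, since every morphism $K\to L$ factors through $r_K$ by weak reflectivity, so $\cl\subseteq\Inj_{\ap}\cf$. Conversely, given $K\in\Inj_{\ap}\cf$, applying $\eps$-injectivity of $K$ to $\id_K:K\to K$ along the morphism $r_K$ produces, for each $\eps>0$, a map $p_\eps:K^*\to K$ with $p_\eps r_K\sim_\eps\id_K$. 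Hence $r_K$ is ap-split, and closure of $\cl$ under ap-split subobjects gives $K\in\cl$.

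The main obstacle is (1)$\Rightarrow$(3): it hinges on the classical Vop\v enka-based characterization of small-injectivity classes by products and retracts in a locally presentable category. The key thing to verify is that this ordinary-categorical result transports cleanly into the enriched setting by passing to the underlying category $\ck_0$, which it does because products and split subobjects in $\ck_0$ coincide with those in $\ck$, and the ap-split closure used in (2)$\Rightarrow$(1) and (3)$\Rightarrow$(2) is genuinely a condition on morphisms of $\ck_0$ equipped with the $\Met_\infty$-enrichment.
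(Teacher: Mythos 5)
Your proof is correct and takes essentially the same route as the paper's: the same cycle $(2)\Rightarrow(1)\Rightarrow(3)\Rightarrow(2)$, with the Vop\v enka-based characterization from \cite{AR} carrying $(1)\Rightarrow(3)$ and the observation that weak reflections of objects of $\Inj_{\ap}\cf$ are $\ap$-split carrying $(3)\Rightarrow(2)$ (this is exactly Remark \ref{re5.7}). Your explicit $\frac{\eps}{2}+\frac{\eps}{2}$ computation for $(2)\Rightarrow(1)$ just unpacks the paper's terse citation of Lemma \ref{le3.4} and Theorem \ref{th5.5}, and if anything is slightly cleaner since it needs no presentability assumptions on $\cf$.
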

\begin{proof}
$(3)\Rightarrow(2)$: By Remark \ref{re5.7}, $\cl$ is an $\ap$-injectivity class w.r.t. weak reflections of $\ck$-objects to $\cl$.

\noindent
$(2)\Rightarrow(1)$ follows from \ref{le3.4} and \ref{th5.5}.

\noindent
$(1)\Rightarrow(3)$ follows from \cite{AR} 6.26, since closure under ap-split subobjects trivially entails  closure under split subobjects.
\end{proof}

\section{The countable case}
Regular monomorphisms in $\Met_{\infty}$ are isometries, and these are stable under pushout. Every finite generalized metric space $A$ is $\aleph_0$-\textit{generated}, in the sense that $\Met_{\infty}(A,-):\Met_{\infty}\to\Set$ preserves directed colimits of isometries. 

A generalized metric space $K$ is $\aleph_0$-\textit{saturated} if, for any isometry $f:A\to B$ between finite generalized metric spaces and any isometry $g:A\to K$, there is an isometry $h:B\to K$ with $hf=g$. This means that $K$ is injective to morphisms
between finite (i.e., $\aleph_0$-generated) objects in the category of generalized metric spaces and isometries.

A generalized metric space is called \textit{rational} if all of its distances are either rational or $\infty$. By an $\aleph_0$-saturated generalized rational metric space we mean a space which is injective to morphisms between finite spaces in the category of generalized rational metric spaces and isometries.

\begin{theo}\label{th6.1}
There is a countable $\aleph_0$-saturated generalized rational metric space.
\end{theo}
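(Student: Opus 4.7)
The natural approach is a Fra\"iss\'e-style construction: build the desired space $K$ as the countable directed colimit of a smooth chain
\[K_0\hookrightarrow K_1\hookrightarrow K_2\hookrightarrow\cdots\]
of finite generalized rational metric spaces, whose connecting maps are isometries, in such a way that every ``one-point extension problem'' is eventually solved. One then takes $K=\colim_n K_n$; this is countable (a countable union of finite sets) and rational (being a directed colimit of rational spaces along isometries, the distances in $K$ agree with those in the $K_n$). The key task is to verify $\aleph_0$-saturation in the limit.

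\textbf{Step 1: Pushouts in the rational world.} Given an isometry $f:A\to B$ between finite generalized rational metric spaces and an isometry $g:A\to K_n$, form the pushout $D$ of $f$ and $g$ in $\Met_\infty$. Its underlying set is $B\sqcup_A K_n$, and for $b\in B\setminus f(A)$ and $k\in K_n\setminus g(A)$ the distance in the semimetric reflection is
\[d(b,k)=\inf_{a\in A}\bigl(d_B(b,f(a))+d_{K_n}(g(a),k)\bigr),\]
the $\inf$ being attained since $A$ is finite; longer alternating paths through $B$ and $K_n$ are not shorter because $B$ is also finite and one may iterate the formula. Since sums and finite minima of rationals (possibly $\infty$) are rational (possibly $\infty$), $D$ is rational; it is finite, and by the stability of isometries under pushout quoted at the start of Section~6, the coprojection $K_n\to D$ is an isometry. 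Thus extension problems can be solved one step at a time inside the category of finite generalized rational metric spaces and isometries.

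\textbf{Step 2: Enumeration and diagonalisation.} Up to isomorphism there are only countably many isometries $f:A\to B$ between finite generalized rational metric spaces, since such a space is determined by a finite set with a rational (or $\infty$) distance matrix. Starting from $K_0=\emptyset$, at each stage $n$ the set of ``extension problems'' $(f:A\to B,\;g:A\to K_n)$ with $A,B$ finite rational is countable, since $K_n$ is finite and there are countably many $f$ up to isomorphism. Enumerate all such problems over all stages by a standard diagonal scheme, and at stage $n+1$ attend to the $n$-th pending problem in the enumeration by replacing $K_n$ with the pushout $K_{n+1}$ constructed in Step~1. Set $K=\bigcup_n K_n$.

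\textbf{Step 3: Verification of saturation.} Given an isometry $f:A\to B$ between finite generalized rational metric spaces and an isometry $g:A\to K$, the image $g(A)$ is finite, hence contained in some $K_n$; so $g$ factors through $g':A\to K_n$. The pair $(f,g')$ is an extension problem entered into the bookkeeping at some point, and is therefore resolved at some stage $m\ge n$, yielding an isometry $h':B\to K_m$ with $h'f=g'$. Composing with the inclusion $K_m\hookrightarrow K$ gives the required $h:B\to K$.

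\textbf{Anticipated obstacle.} The only delicate point is confirming that the pushouts in Step~1 stay inside finite \emph{rational} generalized metric spaces and that the coprojections are \emph{isometries}; once those are in hand, the construction is routine. The bookkeeping itself is standard, provided one is careful that each extension problem at stage $n$ gets entered into the enumeration at a stage early enough to be treated at some later finite stage.
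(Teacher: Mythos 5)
Your proposal is correct and follows essentially the same route as the paper: a Fra\"iss\'e-style chain of finite generalized rational metric spaces glued by pushouts (which preserve rationality and isometries), with the colimit verified to be saturated by factoring any extension problem through a finite stage. The only cosmetic difference is that the paper batches all problems from a finite subset $\cs_n$ of the countably many isometries into a single multiple pushout at each stage, whereas you treat one problem at a time via diagonal bookkeeping; both keep every stage finite and yield the same space.
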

\begin{proof}
Isometries in $\Met_{\infty}$ are stable under pushout; moreover, if the given spaces are rational, so is the pushout. Up to isomorphism, there 
are only countably many finite generalized rational metric spaces. Let $\cs$ be the set of all isometries between them; $\cs$ is countable again. We express $\cs$ as a union of a countable chain of finite subsets $\cs_n$ and will construct a countable chain 
$(k_{ij}:K_i\to K_j)_{i<j<\omega}$ of finite generalized rational metric spaces and isometries, as follows. Let $K_0=\emptyset$. Having $(k_{ij}:K_i\to K_j)_{i<j\leq n}$, we take the diagram consisting of all spans $(u,h)$ where $h\in\cs_n$. Then 
$k_{n,n+1}:K_n\to K_{n+1}$ is given by the corresponding multiple pushout, i.e., by the pushout
$$
\xymatrix@=4pc{
K_n \ar[r]^{k_{n,n+1}} & K_{n+1} \\
\coprod_h X_h\ar [u]^{<u>} \ar [r]_{\coprod h} & \coprod_h Y_h \ar[u]_{}
}
$$
with $h:X_h\to Y_h$ running through $\cs_n$.
As the colimit of a chain of isometrically embedded rational generalized metric spaces, also $K=\colim K_n$ is rational. We claim that $K$ is $\aleph_0$-saturated. Indeed, consider $h:X\to Y$ in $\cs$ and $u:X\to K$. Since $X$ is $\aleph_0$-small,
there is $u':X\to K_n$ such that $k_nu'=u$; here $k_n:K_n\to K$ is a colimit injection. Without any loss of generality, we may assume $h\in\cs_n$. Thus $k_{n,n+1}u'=vh$ for some $v:Y\to K_{n,n+1}$. Hence $u=k_nu'=vh$, as desired.
\end{proof}

A countable $\aleph_0$-saturated generalized rational metric space $U_0$ is, in fact, uniquely determined, up to isomorphism: see, e.g., \cite{R} Theorem 2; it is the Fra\" iss\' e limit of finite generalized rational metric spaces (see \cite{K1}). Its completion $U$ is an 
$\aleph_0$-saturated complete separable metric space, called \textit{Urysohn space} in the literature: see \cite{H} or \cite{K1} for a proof of its so-called universality and homogeneity, from which one easily concludes its $\aleph_0$-saturatedness in $\bf{Met}$. 

In order for us to establish a corresponding result in $\Ban$, we introduce the needed definitions more generally at the level of $\Met_{\infty}$-enriched categories.

\begin{defi} \label{def6.2}
{
\em
Let $\ck$ be a $\Met_{\infty}$-enriched category. A morphism $f:A\to B$ is called an \textit{isometry} if, for every $\eps\geq 0$
and all $u,v:C\to A$, one has
$$
fu\sim_\eps fv\Rightarrow u\sim_\eps v.
$$
}
\end{defi}

\begin{exam}\label{ex6.3}
{
\em
Both in $\Met$ and $\Ban$, isometries have their usual meaning. In $\Met$, it suffices to test them on $u,v:1\to A$, and in $\Ban$ on $u,v:l_1(1)\to A$.
}
\end{exam}

\begin{defi}\label{def6.4}
{
\em
Let $\ck$ be a $\Met_{\infty}$-enriched category. An object $A$ in $\ck$ is $\lambda$-$\eps$-\textit{generated} if, for any $\lambda$-directed
diagram of isometries $(k_{ij}:K_i\to K_j)_{i\leq j\in I}$ with colimit cocone $k_i:K_i\to K$ and every morphism $f:A\to K$, there is $i\in I$ such that
\begin{enumerate}
\item $f$ $\eps$-factorizes through $k_i$, {\em i.e.}, $f\sim_\eps k_ig$ for some $g:A\to K_i$,
\item the $\eps$-factorization is $\eps$-essentially unique, in the sense that, if $f\sim_\eps k_ig$ and $f\sim_\eps k_ig'$, then
$k_{ij}g\sim_\eps k_{ij}g'$ for some $j\geq i$.
\end{enumerate}

We say that $A$ is $\lambda$-$\ap$-\textit{generated} if it is $\lambda$-$\eps$-
generated for every $\eps>0$.
}
\end{defi}

\begin{exam}\label{ex6.5}
{
\em
(1) In $\Ban$, every finite-dimensional space $A$ is $\aleph_0$-$\ap$-generated. 

(2) Since every Banach space is a directed colimit of finite-dimensional Banach spaces and isometries, every $\aleph_0$-$\eps$-generated Banach space
admits an $\eps$-split morphism to a finite-dimensional Banach space, for any $\eps>0$.

(3) More generally, for every isometry $f:X\to Y$ between $\aleph_0$-$\ap$-generated Banach spaces and every $\eps>0$, there is a commutative square
$$
\xymatrix@=4pc{
X \ar[r]^{f} & Y \\
A\ar [u]^{u} \ar [r]_{g} & B \ar[u]_{v}
}
$$
in $\Ban$ with an isometry $g$ between finite-dimensional Banach spaces $A, B$, as well as morphisms
$s:X\to A$, $t:Y\to B$, such that $us\sim_\eps\id_X$, $vt\sim_\eps\id_Y$ and $gs\sim_\eps tf$.
}
\end{exam}

\begin{defi}\label{def6.6}
{
\em
Let $\ck$ be a $\Met_{\infty}$-enriched category. We say that an object $K$ is $\aleph_0$-$\ap$-\textit{saturated} if it is approximately injective to morphisms between $\ap$-$\aleph_0$-generated objects in the category of $\ck$-objects and isometries.
}
\end{defi}

\begin{theo}\label{th6.5}
There is a separable $\aleph_0$-$\ap$-saturated Banach space.
\end{theo}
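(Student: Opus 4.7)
The plan is to adapt the Fra\"iss\'e-style construction used in the proof of Theorem \ref{th6.1} to the Banach setting, with strict pushouts replaced by the $\eps$-pushouts whose existence in $\Ban$ is guaranteed by Proposition \ref{prop2.5}. The first preparatory step is to select a countable family $\cs$ of isometries $g:X\to Y$ between finite-dimensional Banach spaces which is approximately dense in the collection of all such isometries, in the sense that any isometry $X'\to Y'$ between finite-dimensional spaces can be matched arbitrarily well (in Banach--Mazur distance) by some member of $\cs$. This is possible because, for each fixed dimension $n$, the set of $n$-dimensional normed spaces is compact under Banach--Mazur distance, hence separable, and the space of isometries between any two of them is separable as well.

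Next, I would inductively build a chain $0=K_0\hookrightarrow K_1\hookrightarrow \cdots$ of separable Banach spaces as follows. Write $\cs$ as an increasing union of finite sets $\cs_n$. At stage $n$, use the separability of $K_n$ to pick a countable dense set of morphisms $u:X\to K_n$ for each $g:X\to Y$ in $\cs_n$, then form the $(1/n)$-pushout of all the resulting spans simultaneously, amalgamated via coproducts in $\Ban$ as in the proof of Theorem \ref{th6.1}. By Proposition \ref{prop2.5} this $\eps$-colimit exists; moreover, the resulting $K_{n+1}$ is separable because only countably many spans are involved and each target is finite-dimensional. Finally, let $K$ be the $\aleph_1$-directed colimit of the chain $(K_n)$ in $\Ban$; being a countable colimit of separable spaces, $K$ is separable.

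It remains to verify that $K$ is $\aleph_0$-$\ap$-saturated. Given an isometry $f:X'\to Y'$ between $\aleph_0$-$\ap$-generated Banach spaces, a morphism $u:X'\to K$, and $\eps>0$, I would proceed as follows. By Example \ref{ex6.5}(3), one can approximate $f$ by an isometry $g_0:X\to Y$ between finite-dimensional Banach spaces together with morphisms $s:X'\to X$ and $t:Y'\to Y$ satisfying $us\sim_\delta \id_{X'}$, $vt\sim_\delta \id_{Y'}$, and $g_0 s\sim_\delta t f$ for a suitable small $\delta$. Because $X$ is finite-dimensional, $us:X\to K$ factors approximately through some $K_n$; because $g_0$ is within arbitrarily small Banach--Mazur distance of some $g\in\cs_m$, for sufficiently large $n$ the construction at stage $n$ supplies a morphism $v:Y\to K$ with $vg\sim_{1/n}us$. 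Assembling these pieces via the transitivity rule for $\sim_\eps$ yields $vt\circ f\sim_{O(\delta+1/n)} u$, and by choosing $\delta$ and $1/n$ small enough this is within $\eps$. Hence $K$ is $\eps$-injective to $f$ for every $\eps>0$, as required.

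The main technical obstacle will be the careful bookkeeping of errors across the three approximation steps, namely the finite-dimensional reduction from Example \ref{ex6.5}(3), the Banach--Mazur density of $\cs$, and the $(1/n)$-tolerance of the pushout at stage $n$. The construction must ensure that the total accumulated error can be made smaller than any prescribed $\eps>0$, which is precisely the type of quantitative estimate underlying the classical existence proof of the Gurarii space; the enriched categorical framework developed in Sections 2--4 allows us to package this in a uniform way, but the selection of parameters still needs to be arranged by a standard diagonal argument.
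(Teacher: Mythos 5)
Your overall strategy is the paper's: build a countable chain of separable spaces by amalgamating, at each stage $n$, spans formed from a countable ``dense'' family $\cs$ of isometries between finite-dimensional spaces, pass to the colimit $K=\colim K_n$, and verify approximate injectivity by using that finite-dimensional spaces are $\aleph_0$-$\ap$-generated (Example \ref{ex6.5}), so that a morphism $X\to K$ need only $\eps$-factor through some finite stage. Your countability reduction via Banach--Mazur compactness is a legitimate substitute for the paper's reduction to rational isometries between rational finite-dimensional spaces (imported there from \cite{GK} and \cite{K2}); both produce the needed countable set $\cs$.

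The one genuine deviation is your use of $(1/n)$-pushouts where the paper uses ordinary pushouts, and it leaves a gap you do not address. The verification step ``$us:X\to K$ factors approximately through some $K_n$'' rests on Definition \ref{def6.4}, which applies only to directed diagrams of \emph{isometries}, and $\aleph_0$-$\ap$-saturation itself is injectivity in the category of objects and isometries; so you must know that the connecting maps $k_{n,n+1}$ are isometries. With strict pushouts this is immediate, since isometries are pushout-stable in $\Ban$ (\cite{ASCGM} 2.1) --- which is exactly why the paper amalgamates with honest pushouts and lets all of the approximation enter only through the $\ap$-generatedness of the domains and the density of $\cs$. With $\eps$-pushouts you must additionally argue that the leg $K_n\to K_{n+1}$ is still an isometry in the sense of Definition \ref{def6.2}; this does hold (compare the $\eps$-pushout with the strict pushout via the universal property and use that composition is non-expansive), but it is an unstated obligation, and the $\eps$-pushouts buy you nothing: each span being amalgamated consists of an actual isometry $g\in\cs_n$ and an actual morphism $u:X\to K_n$, so there is no approximation to absorb at that point. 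Drop Proposition \ref{prop2.5} from the argument and amalgamate with ordinary multiple pushouts, exactly as in Theorem \ref{th6.1}. Two smaller points: the colimit of the chain $(K_n)$ is an $\omega$-chain colimit, not an $\aleph_1$-directed one; and restricting to a countable dense set of morphisms $u:X\to K_n$ at each stage is the right move for separability, but it forces one more perturbation in the verification (the actual $us$ is only close to a chosen $u$), which your error bookkeeping should record explicitly.
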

\begin{proof}
By \ref{ex6.5}(2), a Banach space is $\aleph_0$-ap-saturated if, and only if, it is approximately injective to isometries between finite-dimensional
Banach spaces. Now we proceed in the same way as in \ref{th6.1}. Isometries are pushout stable in $\Ban$ (see \cite{ASCGM}, 2.1) and, following \cite{GK} 2.7 and \cite{K2}, a Banach space is approximately injective to finite-dimensional Banach spaces if, and only if, it is approximately injective to rational
isometries between finite-dimensional rational Banach spaces. (For the meaning of ``rational" in the Banach space context, see \cite{GK}.) Up to isomorphism, there are only countably many such isometries. We will show that $K$, constructed analogously to the construction in \ref{th6.1}
from the countable set $\cs$ of relevant isometries as a colimit of separable spaces $K_n$ (with $K_0$ the null space), is 
$\aleph_0$-$\ap$-saturated. For that, consider $h:X\to Y$ in $\cs$ and $u:X\to K$, and let $\eps>0$. Since $X$ is $\aleph_0$-$\ap$-generated, there is $u':X\to K_n$ 
such that $k_nu'\sim_\eps u$. Without loss of generality we may assume that $h\in\cs_n$. Consequently, $k_{n,n+1}u'=vh$ for some $v:Y\to K_{n,n+1}$ and, hence, 
$$
u\sim_\eps k_nu'=k_{n+1}k_{n,n+1}u'=k_{n+1}vh,
$$
as desired.
\end{proof}

A separable $\aleph_0$-ap-saturated Banach space is in fact uniquely determined, up to isomorphism; it coincides with the {\em Gurarii space} (see \cite{K2}).

\end{document}